\newtheorem{theorem}{Theorem}[section]
\newtheorem{lemma}[theorem]{Lemma}
\newtheorem{proposition}[theorem]{Proposition}
\newtheorem{corollary}[theorem]{Corollary}
\theoremstyle{definition}
\newtheorem{definition}[theorem]{Definition}
\newtheorem{example}[theorem]{Example}
\newtheorem{algorithm}[theorem]{Algorithm}
\theoremstyle{remark}
\newtheorem{remark}[theorem]{Remark}
\DeclareMathOperator{\lf}{lf}
\DeclareMathOperator{\Syz}{Syz}
\begin{document}
\title{Macaulay bases of modules}

\author{Sujit Rao}
\address{CSAIL, MIT, Cambridge, MA 02139}
\email{sujit@mit.edu}

\subjclass[2010]{Primary 12Y05, 13P10. Secondary 13A50.}
\date{\today}
\keywords{Groebner bases, Macaulay $H$-bases, syzygies, invariant theory}

\begin{abstract}
We define Macaulay bases of modules, which are a common generalization of Groebner bases and Macaulay $H$-bases to suitably graded modules over a commutative graded $\mathbf{k}$-algebra, where the index sets of the two gradings may differ. This includes Groebner bases of modules as a special case, in contrast to previous work on Macaulay bases of modules. We show that the standard results on Groebner bases and Macaulay $H$-bases generalize in fields of arbitrary characteristic to Macaulay bases, including the reduction algorithm and Buchberger's criterion and algorithm. A key result is that Macaulay bases, in contrast to Groebner bases, respect symmetries when there is a group $G$ acting homogeneously on a graded module, in which case the reduction algorithm is $G$-equivariant and the $\mathbf{k}$-span of a Macaulay basis is $G$-invariant. We also show that some of the standard applications of Groebner bases can be generalized to Macaulay bases, including elimination and computation of syzygy modules, which require the generalization to modules that was not present in previous work.
\end{abstract}

\maketitle
\tableofcontents

\section{Introduction}
Many computational tasks involving a module $M$ (or an ideal) over a finitely generated commutative ring $R$ can be solved by first computing a \textit{Groebner basis}, which is a special generating set of $M$ parameterized by a \textit{term order}. Groebner bases were first defined by Buchberger, who also gave an algorithm for constructing them. In the special case when $R = \mathbf{k}[x]$ is a polynomial ring in one variable over a field $\mathbf{k}$, Buchberger's algorithm is the same as a combination of Euclid's algorithm for greatest common divisor and polynomial long division. In multiple variables there is no longer a canonical ordering on monomials, so Groebner bases require introducing a term order so that long division can be generalized to a multivariate reduction algorithm. When the polynomials are linear, Buchberger's algorithm is the same as Gaussian elimination.
By now Groebner bases have become well known and several further applications have been developed, including \textit{elimination} of variables from a system of polynomials, and computing \textit{syzygy modules}. We use \cite{cocoa1} as our primary reference on Groebner bases, although we will reprove many of the relevant results from scratch in our more general setting.

While very useful in many cases, it is sometimes undesirable to choose a full term order, which forces us to distinguish variables in a polynomial ring and break symmetries. For example, we often encounter a module with an action by a group $G$, such as $S_{n}$ when an ideal is generated by symmetric polynomials, but computing a Groebner basis will almost always destroy the symmetry. One solution is to compute generators for the ring of polynomials invariant under $G$ and to re-express a presentation of the module in terms of these generators. However, the generators may not be algebraically independent, in which case there is not a canonical re-expression of the module. Additionally, computing generators for the invariant ring can be expensive unless the action is well-studied beforehand (such as for $S_{n}$) or the result can be pre-computed.

In this paper, we introduce \textit{Macaulay bases} for rings and modules graded by suitable monoids, which generalize Groebner bases for the monoids $\mathbb{N}^{d}$ and $(\mathbb{N}^{d})^{n}$ when $R$ is a polynomial ring in $d$ variables and $M \leq R^{\oplus n}$ is a submodule of a free module, generalize Macaulay $H$-bases of polynomial ideals (\cites{ms2003, sauer2001, cocoa2}) for the monoid $\mathbb{N}$, and generalize the Macaulay bases defined in \cite{cocoa2}*{Tutorial 48} when a ring is graded by a different monoid from the module on which it acts. In Sections~\ref{sec:prelim} and \ref{sec:reduction} we give generalizations of standard theorems about Groebner and $H$-bases, including analogues of the division algorithm and Buchberger's algorithm first proposed in \cite{sauer2001}. Our results provide a common generalization of those for Groebner and Macaulay $H$-bases, and extend the results on Macaulay $H$-bases in \cite{sauer2001} to fields of nonzero characteristic and to Macaulay bases of modules. In Section~\ref{sec:groupact} we give conditions under which a Macaulay basis respects the symmetry of a module with an action of a finite group. In Section~\ref{sec:applications} we give generalizations of the results used in several applications of Groebner bases and Macaulay $H$-bases, including elimination of variables, computation of Hilbert functions, finding generators of syzygy modules and free resolutions, and computation of homogenizations and projective closures.

\subsection{Previous work on Macaulay bases}
The definitions and results presented here generalize all of the previous ones known to the author. In particular, when $R$ is a polynomial ring graded by $\mathbb{N}^{d}$ and $M$ is a free $R$-module graded by $(\mathbb{N}^{d})^{n}$ we recover usual Groebner bases of modules; when $R$ is a polynomial ring graded by $\mathbb{N}$ using total degree and $M$ is a rank-1 free $R$-module we recover Macaulay $H$-bases of ideals as studied in \cites{ms2003, sauer2001, cocoa2}; and when the indexing sets of the gradings for $R$ and $M$ are the same we recover the Macaulay bases discussed on page-43 of \cite{cocoa2} (note that for Groebner bases of modules the indexing sets differ).

We additionally generalize the reduction algorithm of \cite{sauer2001}, which requires a family of inner products, to fields of nonzero characteristic where there may not be an inner product. We also provide further generalizations of many of the standard results on Groebner bases, including uniqueness of reduced bases, and the applications for elimination, Hilbert functions, syzygies, and homogenizations discussed in Section~\ref{sec:applications}. Moreover, in Section~\ref{sec:groupact} we state and prove a precise result that Macaulay bases can preserve group actions when the action preserves the gradings of $R$ and $M$, which was stated but not formalized or proved in \cite{sauer2001}. Of course, there are still several algorithmic issues which need to be solved before Macaulay bases can be used for practical computations and are discusses in Section~\ref{sec:issues}. In characteristic zero some work on the reduction algorithm was presented in \cite{sauer-red}.

\section{Preliminaries}

\label{sec:prelim}
Throughout, let $\mathbf{k}$ be a field $S = \mathbf{k}[x_{1}, \dots, x_{d}]$ be a polynomial ring in $d$ variables over $\mathbf{k}$. We first introduce some notation and then generalize the definition of Groebner bases.

\begin{definition}
A commutative monoid $(A, +, 0)$ is \textbf{totally ordered} if there is a total order $\leq$ on $A$ such that $0 < a$ for all $a \neq 0$ and $a \leq b$ implies $a + c \leq b + c$ for all $a, b, c \in A$.
\end{definition}

\begin{definition}
Let $A$ be a totally ordered commutative monoid and $R$ be a commutative ring. An \textbf{$A$-grading} of $R$ is a decomposition $R = \bigoplus _{a \in A} R_{a}$ into abelian groups where $R_{a}R_{b} \subseteq R_{a + b}$ for all $a, b \in A$.
If $M$ is an $R$-module and $B$ is a totally ordered set on which $A$ acts monotonically, then a \textbf{$B$-grading} of $M$ is a decomposition $M = \bigoplus _{b \in B} M_{b}$ into abelian groups where $R_{a}M_{b} \subseteq M_{a \cdot b}$.
The \textbf{degree} and \textbf{leading form} of $m \in M$ are
\begin{align*}
\deg m &\coloneqq \max \{b \in B : m_{b} \neq 0\} \\
\lf m &\coloneqq m_{\deg m}
\end{align*}
respectively, where $m = \sum _{b \in B} m_{b}$ is the decomposition of $m$ according to the $B$-grading.
An element $m \in M$ is \textbf{homogeneous} of degree $b$ if $m \in M_{b}$. These extend to the case when $m \in R$ by considering $R$ as an $A$-graded free module over itself.
\end{definition}

When otherwise unstated, we will assume that $R$ is an $A$-graded $\mathbf{k}$-algebra for some field $\mathbf{k}$, and that $M \leq N$ is an $R$-submodule of some $B$-graded $R$-module $N$. This includes the case when $M$ is an ideal, as we can take $N = R$ and $B = A$. In many cases, and in particular for describing certain reduction algorithms, we will also need to assume that $\operatorname{char} \mathbf{k} = 0$, although we will explicitly state when a result is independent of $\operatorname{char} \mathbf{k}$. Additionally, we will assume that $A$ and $B$ are \textbf{cancellative}, that is, $a + c = b + c$ implies $a = b$ for all $a, b, c$.

To guarantee that any computations terminate we will typically need to assume that $R$ is noetherian, which by the Hilbert basis theorem is true whenever $R$ is a polynomial ring over $\mathbf{k}$ in a finite number of variables. For the same reason we will also need to assume that $A$ and $B$ are both finitely generated and that their orderings are well-orderings, allowing us to induct over the orders.

\begin{definition}
A partial order $\leq$ on a commutative monoid $A$ is a \textbf{monoid ordering} if $a \leq b$ implies $a + c \leq b + c$ for all $a, b, c \in A$ and $0 \leq a$ for all $a \in A$.
\end{definition}

\begin{definition} \label{def:macbasis}
Suppose $A$ and $B$ have given total monoid orderings. Then $m_{1}, \dots, m_{k} \in M$ is a \textbf{Macaulay basis} with respect to the grading if we have equality of the submodules
\[ (\lf(p) : p \in M) = (\lf(f_{1}), \dots, \lf(f_{k})). \]
\end{definition}

\begin{example} \label{ex:mongrad}
Any monomial order $\preceq$ produces a corresponding total monoid order over the monoid $(\mathbb{N}^{d}, +, 0)$. There is also a canonical $\mathbb{N}^{d}$-grading of $S$ where $S_{a_{1}, \dots, a_{d}} = \langle x_{1}^{a_{1}} \cdots x_{d}^{a_{d}} \rangle$. A Macaulay basis of an ideal $I \leq S$ with respect to this grading and ordering is the same as a Groebner basis with respect to the monomial order $\preceq$.
\end{example}

\begin{example} \label{ex:grobgrade}
The module $N^{\oplus n}$ is graded by the monoid $(\mathbb{N}^{d})^{n}$, and $\mathbb{N}^{d}$ acts on $(\mathbb{N}^{d})^{n}$ by addition on each component. If in addition we put a term order on $(\mathbb{N}^{d})^{n}$, a Macaulay basis of a submodule $M \leq (\mathbb{N}^{d})^{\oplus n}$ is the same as a Groebner basis with respect to the term order.
\end{example}

\begin{remark}
If $A$ is a totally ordered commutative monoid and is also cancellative, we can extend the order on $M$ to its Grothendieck group. In that case the existence of a total ordering shows that the Grothendieck group is torsion-free (as torsion implies the ordering will contain a cycle), and since it is also finitely generated it must be isomorphic to $\mathbb{Z}^{n}$ for some $n$ as an (unordered) group. In particular, since the total ordering is compatible with the monoid structure it must refine the Green partial order (defined by $a \leq b$ if $a + c = b$ for some $c$). This implies that $A$ is well-ordered and hence we can induct on its elements according the ordering. Moreover, the order type of $A$ is $\omega$, so in induction proofs we will sometimes write $a + 1$ to denote the element which covers $a$ in the total order.
\end{remark}

\begin{example} \label{ex:macbasis}
The \textbf{standard grading} of a polynomial ring is $S = \bigoplus _{k \geq 0} S_{k}$ where $S_{k}$ is the vector space of homogeneous polynomials of degree $k$. A Macaulay basis of an ideal with respect to the standard grading is known as a \textbf{Macaulay $H$-basis} (where $H$ stands for ``homogeneous'') \cite{sauer2001}. Macaulay $H$-bases were first investigated in the early 1900's by Macaulay, and were further developed and formalized by Sauer in \cite{sauer2001} in a modern style analogous to that of Groebner bases.

Definition \ref{def:macbasis} also gives two possible generalizations of Macaulay $H$-bases to submodules of free modules. In the first we grade $S^{\oplus n}$ by $\mathbb{N}$, where $(S^{\oplus n})_{k} = (S_{k})^{\oplus n}$. Alternatively we can grade $S^{\oplus n}$ by $\mathbb{N}^{n}$, where
\[ (S^{\oplus n})_{(k_{1}, \dots, k_{n})} = \bigoplus _{i = 1} ^{n} S_{k_{i}}. \]
\end{example}

\begin{example}
Suppose $R$ is an $A$-graded ring and $N = \bigoplus _{i = 1} ^{r} R(a_{i})$ is an $A$-graded module, where $R(a_{i})$ is the $A$-graded $R$-module with grading
\[ R(a_{i})_{a} = \begin{cases}
R_{a'} & \text{there exists $a' \in A$ such that $a' + a_{i} = a$} \\
0 & \text{otherwise}
\end{cases} \]
and the direct sum of two $A$-graded modules $M_{1}$ and $M_{2}$ has the grading $(M_{1} \oplus M_{2})_{a} = (M_{1})_{a} \oplus (M_{2})_{a}$. Letting $A$ act on itself by $a \cdot a' = a + a'$, we recover the definition of Macaulay bases in Tutorial 48 of \cite{cocoa2}. The definition of Macaulay bases given here allows the index set of the grading of $N$ to be different from $A$. One example is the second generalization of Macaulay bases to modules given in Example~\ref{ex:macbasis}. Another is the standard definition of Groebner bases of modules, which are not a special case of the Macaulay bases defined in \cite{cocoa2} but are a special case of the definition given here, as shown in Example~\ref{ex:grobgrade}.
\end{example}

\section{Reduction algorithm and characterizations of Macaulay bases}
\label{sec:reduction}
In this section we give a generalization of Sauer's reduction algorithm \cite{sauer2001} for Macaulay bases. We also state some equivalent characterizations of Macaulay bases, which are easy generalizations of standard characterizations of Macaulay bases and of Groebner bases found in \cites{cocoa1, cocoa2, ms2003}.

\begin{definition}
Given a vector $(m_{1}, \dots, m_{n}) \in M^{\oplus n}$, its \textbf{syzygy module} is
\[ \Syz(m_{1}, \dots, m_{n}) = \left\{(f_{1}, \dots, f_{n}) \in R^{\oplus n} \middle| 0 = \sum _{i = 1} ^{n} f_{i}m_{i}\right\}. \]
An element of the syzygy module is called a \textbf{syzygy}.
\end{definition}

The fact that graded components of $R$ and $M$ can have dimension greater than one over $\mathbf{k}$ necessitates the following definition in order to state some of the results on reduction and normal forms with respect to Macaulay bases, although in some cases we will be able to avoid it.

\begin{definition}
A \textbf{complemented vector space} is a vector space $V$ over $\mathbf{k}$ along with a map $(-)^{c}$ on subspaces of $V$ sending each subspace to a fixed complement, i.e.\ $W \oplus W^{c} = V$ for all subspaces $w \leq V$. A \textbf{graded complemented vector space} is a graded vector space $V$ where each graded component is a complemented vector space. When $V$ is a graded module over a graded $\mathbf{k}$-algebra, we consider it to be a graded vector space by treating each of its graded components as vector spaces.
\end{definition}

\begin{example}
Our primary example of complemented vector spaces will come from graded vector spaces over a field of characteristic zero with a \textbf{graded inner product}. Specifically, if $V = \bigoplus _{b \in B} V_{b}$ is a graded vector space, a graded inner product on $V$ is an inner product $(\cdot, \cdot)$ on $V$ such that $V_{b} \perp V_{b'}$ for $b \neq b'$. The complement of a subspace $W \leq V_{b}$ is then defined to be the orthogonal complement
\[ W^{c} \coloneqq \{v \in V_{b} \mid \text{$(v, w) = 0$ for all $w \in W$}\}. \]
If $U \leq V_{b}$ is another subspace, then we define $U \ominus W \coloneqq U \cap W^{c}$.
\end{example}

We now give the following definition of the reduction algorithm, which generalizes the standard reduction algorithm for Groebner bases and the $H$-basis reduction algorithm from \cite{sauer2001}. Note that the reduction algorithm of \cite{sauer2001} requires a choice of an inner product, which can only be done when $\operatorname{char} \mathbf{k} = 0$. A canonical choice of complements can still be made when $\operatorname{char} \mathbf{k} \neq 0$, in which case the following reduction algorithm still satisfies the usual properties.

\begin{definition}[Reduction algorithm]
\label{def:reduction}
Suppose $N$ has a given structure as a graded complemented vector space.
Given $X = \{m_{1}, \dots, m_{n}\} \subseteq M$, define the subspaces
\[ W_{b}(X) \coloneqq \operatorname{span}_{\mathbf{k}} \{ r(\lf m_{i}) : r \in R, \deg r m_{i} = b, 1 \leq i \leq n \} \leq N_{b} \]
for all $b \in B$. We define a binary relation $\rightarrow_{X} \subseteq M^{2}$ on $M$ as follows: $m \rightarrow_{X} m'$ if $m' = m - \sum _{j = 1} ^{k} r_{j}m_{j}$ where $m_{j} \in X$ for $1 \leq j \leq k$ and
\[ m_{b} = \sum _{j = 1} ^{k} r_{j}(\lf m_{j}) \in W_{b}(X) \]
with $b = \max \{b' \leq \deg m \mid m_{b'} \in W_{b'}(X) \}$. If furthermore $N$ has a given structure as a graded complemented vector space, then we define the relation $m \Rightarrow_{X} m'$ if $m' = m - \sum _{j = 1} ^{k} r_{j}m_{j}$ where $m_{j} \in X$ for $1 \leq j \leq X$, but weaken the second condition to only require that
\[ m_{b} - \sum _{j = 1} ^{k} r_{j}(\lf m_{j}) \in W_{b}(X)^{c} \leq N_{b} \]
with $b = \max \{b' \leq \deg m \mid m_{b'} \notin W_{b'}(X)^{c}\}$.
Otherwise $\rightarrow_{X}$ and $\Rightarrow_{X}$ contain no other elements in $M^{2}$.

The relations $\rightarrow_{X}^{*}$ and $\Rightarrow_{X}^{*}$ are defined to be the reflexive and transitive closures of $\rightarrow_{X}$ and $\Rightarrow_{X}^{*}$ respectively. We say that $m$ \textbf{reduces} to $m'$ with respect to $X$ if $m \rightarrow_{X}^{*} m'$, with context determining if we use $\rightarrow_{X}^{*}$ or $\Rightarrow_{X}^{*}$. If there is no $m'$ such that $m \rightarrow_{X} m'$ or $m \Rightarrow_{X}^{*} m'$, then $m$ is \textbf{reduced} with respect to $X$.
\end{definition}

\begin{example}
For Groebner bases over an arbitrary field, each graded component of $N$ has dimension one over $\mathbf{k}$. Thus $N$ has a unique structure as a graded complemented vector space. In this case the definition of the reduction algorithm agrees with the usual Groebner basis reduction algorithm.
\end{example}

\begin{remark}
For doing concrete computations, we will typically have a canonical graded inner product on $N$. To execute the reduction algorithm, it is then necessary to compute bases of the spaces $W_{b}(X)$ and to perform orthogonal projections onto it. The task of computing a basis was studied for the special case of $H$-bases in \cite{sauer-red}. However, since $\dim_{\mathbf{k}} W_{b}(X)$ and $\dim_{\mathbf{k}} M_{b}$ can be very large it is likely impractical to store full bases for all $W_{b}(X)$, and ideally we would be able to compute orthogonal projections onto $W_{b}(X)$ without needing to compute or store an explicit basis.
\end{remark}

The following fact is standard for term rewriting systems and also necessary for our purposes.
\begin{lemma}
The relations $\rightarrow_{X}$ and $\Rightarrow_{X}$ are Noetherian, i.e.\ have no infinite chains.
\end{lemma}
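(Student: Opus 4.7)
The plan is to assign to each reducible $m \in M$ its distinguished degree $b(m) \in B$ from Definition~\ref{def:reduction}, show that one reduction step $m \rightarrow_X m'$ (or $m \Rightarrow_X m'$) satisfies $b(m') < b(m)$ strictly, and invoke the well-ordering of $B$ (noted in the preliminaries) to conclude that no infinite chain can exist.

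First I would analyze a single step. Writing $m' = m - \sum_{j=1}^{k} r_j m_j$ with $b = b(m)$, each $r_j$ may be taken homogeneous of the unique degree $a_j \in A$ with $a_j \cdot \deg m_j = b$, since only such elements appear in the definition of $W_b(X)$. Monotonicity of the action of $A$ on $B$ then ensures that $r_j m_j$ has no graded components above degree $b$, and that its degree-$b$ component is exactly $r_j(\lf m_j)$. Consequently
\[
m'_c = m_c \text{ for all } c > b, \qquad m'_b \;=\; m_b - \sum_{j=1}^{k} r_j(\lf m_j),
\]
the latter being $0$ in the case of $\rightarrow_X$ and lying in $W_b(X)^c$ in the case of $\Rightarrow_X$.

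Next I would apply the maximality clause defining $b = b(m)$: for every $c$ with $b < c \leq \deg m$ we have either $m_c = 0$ or $m_c \notin W_c(X)$ (respectively $m_c \in W_c(X)^c$), and the first displayed equation transfers each such property from $m$ to $m'$. Combined with the description of $m'_b$, no $c \geq b$ can serve as the distinguished degree of $m'$, so $b(m') < b$ strictly. Any infinite chain of reductions would then produce an infinite strictly descending sequence in $B$, contradicting well-ordering.

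The argument is essentially bookkeeping; the only genuine subtlety is that the literal reading of $\rightarrow_X$ permits a trivial step with $m' = m$ when $m_b = 0$, which I would exclude (equivalently, read the maximality clause as restricted to $b'$ with $m_{b'} \neq 0$). The same invariant then works uniformly for both $\rightarrow_X$ and $\Rightarrow_X$, since in each case the degree-$b$ component becomes ``reduced'' (zero, respectively in $W_b(X)^c$) while higher-degree components are preserved intact.
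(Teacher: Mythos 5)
Your proof is correct and follows essentially the same route as the paper: identify the distinguished degree $b(m)$ from the reduction definition, show it strictly decreases under one step, and invoke the well-ordering of $B$. The paper states this in one line (with what appear to be typos — the inequality direction and the unprimed $m_{b'}$ on the right — but the intent is plainly that the degree drops). Your write-up is more careful about why the step actually drops the degree, in particular that $r_j m_j$ contributes nothing above degree $b$ once the $r_j$ are taken homogeneous of the right degree, and your observation about the trivial self-loop when $m_b = 0$ is a fair reading of the definition that the paper glosses over; with the natural convention that the distinguished degree is restricted to nonzero components, the argument goes through uniformly for both $\rightarrow_X$ and $\Rightarrow_X$.
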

\begin{proof}
For $\rightarrow_{X}$, we see that if $m \rightarrow_{X} m'$ then
\[ \max \{b' \leq \deg m \mid m_{b'} \in W_{b'}(X) \} < \max \{b' \leq \deg m' \mid m_{b'} \in W_{b'}(X) \}. \]
Since $B$ has no infinite descending chains, this implies that a reduction sequence must eventually terminate. Similarly, if $m \Rightarrow_{X}^{*} m'$ then
\[ \max \{b' \leq \deg m \mid m_{b'} \notin W_{b'}(X)^{c}\} < \max \{b' \leq \deg m' \mid m_{b'} \notin W_{b'}(X)^{c}\}. \qedhere \]
\end{proof}

The following proposition connects the two possible definitions of the reduction algorithm in Definition \ref{def:reduction}.

\begin{proposition}
\label{prop:reduction-equiv}
Suppose $N$ has the structure of a complemented vector space. Then $m \rightarrow_{X}^{*} 0$ if and only if $m \Rightarrow_{X}^{*} 0$.
\end{proposition}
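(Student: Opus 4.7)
The plan is to show that both $m \rightarrow_X^* 0$ and $m \Rightarrow_X^* 0$ are equivalent to $m$ admitting a \emph{standard representation}: an expression $m = \sum_j r_j m_j$ with $m_j \in X$, $r_j \in R$ homogeneous, and $\deg(r_j m_j) \leq \deg m$ for every index $j$. Since both reduction relations then become characterized by the same external property, the proposition follows.

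One direction---that reduction to $0$ implies a standard representation---is essentially the same argument for both relations. Along any chain $m = m^{(0)} \rightarrow_X \cdots \rightarrow_X m^{(T)} = 0$ (or the analogous chain with $\Rightarrow_X$), each step subtracts an element whose top homogeneous component lies at degree $b^{(t)} \leq \deg m^{(t-1)}$, so a simple induction on $t$ shows $\deg m^{(t)} \leq \deg m^{(0)}$. Summing the successive subtractions recovers $m = \sum_t \sum_j r_j^{(t)} m_j^{(t)}$ with every summand of degree $\leq \deg m$, which is a standard representation.

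For the reverse direction, I would induct on $\deg m$, which is legitimate thanks to the well-ordering of $B$ remarked on earlier. If $m = 0$ the claim is trivial. Otherwise, extracting the top-degree slice of a standard representation gives
\[ \lf m = \sum_{j \,:\, \deg(r_j m_j) = \deg m} r_j \lf m_j \in W_{\deg m}(X), \]
and $\lf m \neq 0$ by hypothesis. This single observation places $\deg m$ in the candidate set for the maximum appearing in both reduction definitions: for $\rightarrow_X$ because $\lf m$ is a nonzero element of $W_{\deg m}(X)$, and for $\Rightarrow_X$ because $W_{\deg m}(X) \cap W_{\deg m}(X)^c = 0$ forces the nonzero $\lf m$ out of $W_{\deg m}(X)^c$. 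Since $\deg m$ is the upper bound of either candidate set it is automatically their maximum, so we may perform one reduction step at $b = \deg m$ by subtracting the top slice $\sum_{\deg(r_j m_j) = \deg m} r_j m_j$. The result $m' = \sum_{\deg(r_j m_j) < \deg m} r_j m_j$ has $\deg m' < \deg m$ and its own standard representation, so the inductive hypothesis closes the argument for both relations.

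The main point to check is that $b = \deg m$ legitimately satisfies the maximality requirement in the $\Rightarrow_X$ definition; this is immediate because the candidate set is contained in $\{b' \leq \deg m\}$. The real content is that reduction to $0$ can always be carried out using only top-degree reductions, which is precisely where the two relations coincide, so the notion of standard representation is the conceptual pivot that makes the equivalence visible without any delicate computation.
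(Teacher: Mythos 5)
Your ``standard representation'' pivot has a genuine gap in the reverse direction: it is \emph{not} true that every $m$ admitting a standard representation reduces to $0$. After you perform the top-degree reduction step, you obtain $m' = \sum_{\deg(r_j m_j) < \deg m} r_j m_j$, and this is a representation whose summands have degree $< \deg m$ --- but not necessarily $\leq \deg m'$, so it need not be a standard representation of $m'$. The top slice of that new representation may cancel, dropping $\deg m'$ below the representation degree, at which point you have no control and the induction on $\deg m$ does not close. Concretely, take $R = \mathbf{k}[x,y,z]$ with the standard grading, $X = \{m_1, m_2, m_3\}$ with $m_1 = x^3$, $m_2 = xy + 1$, $m_3 = xy + z$, and $m = m_1 + m_2 - m_3 = x^3 + 1 - z$. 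Then $\deg(r_j m_j) \leq 3 = \deg m$ for every summand, so $m$ has a standard representation. Reducing at degree $3$ (with any choice of coefficients, since $\lf m_2 = \lf m_3 = xy$ forces the degree-$1$ multipliers of $m_2$ and $m_3$ to be negatives of each other) yields $m' = 1 - z - r + rz$ for some linear $r$. No choice of $r$ lands $m'$ in a position where a further reduction step is possible: in every case $\lf m'$ involves $z$, whereas $W_1(X) = W_0(X) = 0$ and $W_2(X) = \operatorname{span}\{xy\}$. Hence $m \centernot\rightarrow_X^* 0$ and $m \centernot\Rightarrow_X^* 0$ despite the standard representation. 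Closing this gap is precisely the content of condition~(\ref{lst:buchberger}) $\Rightarrow$ condition~(\ref{lst:expchar}) in Theorem~\ref{thm:conditions}, which requires the syzygy-reduction hypothesis and is not available for an arbitrary $X$.

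The forward direction of your lemma (reduction to $0$ implies a standard representation) is fine, and it is also the engine the paper uses for (\ref{lst:redchar})~$\Rightarrow$~(\ref{lst:expchar}). But the paper's proof of Proposition~\ref{prop:reduction-equiv} never goes through an external characterization: it inducts on the number of reduction steps and translates one relation's steps directly into the other's, using the key observation you also identified --- any chain that terminates at $0$ must fully clear the top degree at each step, which is exactly where $\rightarrow_X$ and $\Rightarrow_X$ coincide. That step-by-step translation avoids the need for the false element-wise converse and is the place your argument needs to be repaired.
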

\begin{proof}
Since the reduction relations are Noetherian, we may induct on the number of reduction steps. First suppose $m \rightarrow_{X} m'$ and let $b = \deg m$. Then $m_{b} \in W_{b}(X)$, so the projection of $m_{b}$ onto $W_{b}(X)^{c}$ is 0. Thus $m \Rightarrow_{X} m'$. A straightforward induction argument shows that if $m \rightarrow_{X}^{*} 0$ then $m \Rightarrow_{X}^{*} 0$.

Now suppose $m \Rightarrow_{X}^{*} 0$, and consider the first reduction step $m \Rightarrow_{X} m'$. Let $b = \deg m$. Then $m'_{0} = 0$ since $m \Rightarrow_{X}^{*} 0$. Thus $m_{b} \in W_{b}(X)$ and so $m \rightarrow_{X} m'$. A similar induction argument shows that $m \rightarrow_{X}^{*} 0$.
\end{proof}

We will say that an element $m \in M$ reduces to zero with respect to a finite set $X \subseteq M$ if $m \rightarrow_{X}^{*} 0$ or $m \Rightarrow_{X}^{*} 0$ depending on context, and by Proposition \ref{prop:reduction-equiv} we are justified in ignoring the difference between the two relations.

We are now ready to state and prove the following theorem, which gives several equivalent characterizations of Macaulay bases. This generalizes Theorem~2.4.1 of \cite{cocoa1} for Groebner bases of modules, the discussion starting with Section~4.2.B of \cite{cocoa2}, and Theorem~2.3 of \cite{ms2003} for Macaulay H-bases of ideals.

\begin{theorem}
\label{thm:conditions}
Let $X = \{m_{1}, \dots, m_{n}\} \subseteq M$. Then the following conditions are equivalent:
\begin{enumerate}
\item The set $X$ is a Macaulay basis of $M$. \label{lst:macbasis}
\item Every $m \in M$ reduces to zero with respect to $X$. \label{lst:redchar}
\item For every $m \in M$, there are $r_{1}, \dots, r_{n} \in R$ such that $m = \sum _{i = 1} ^{n} r_{i}m_{i}$ and $\deg r_{i}m_{i} \leq \deg m$ for all $1 \leq i \leq n$. \label{lst:expchar}
\item If $(s_{1}, \dots, s_{n}) \in \Syz(\lf m_{1}, \dots, \lf m_{n})$ then $\sum _{i = 1} ^{n} s_{i}m_{i}$ reduces to zero with respect to $X$. \label{lst:buchberger}
\end{enumerate}
\end{theorem}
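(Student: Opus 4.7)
The plan is to prove the cycle (\ref{lst:macbasis}) $\Rightarrow$ (\ref{lst:redchar}) $\Rightarrow$ (\ref{lst:expchar}) $\Rightarrow$ (\ref{lst:macbasis}), then add the trivial (\ref{lst:redchar}) $\Rightarrow$ (\ref{lst:buchberger}), and close with a Buchberger-style descent (\ref{lst:buchberger}) $\Rightarrow$ (\ref{lst:expchar}). The main obstacle is the Buchberger step; everything else is bookkeeping with leading forms, using the well-ordering of $B$ noted in the remark after Definition~\ref{def:macbasis}.

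For (\ref{lst:macbasis}) $\Rightarrow$ (\ref{lst:redchar}) I would induct on $\deg m$: the basis condition gives $\lf m = \sum_i r_i \lf m_i$, and extracting $A$-homogeneous components lets me take each $r_i$ homogeneous with $\deg r_i \cdot \deg m_i = \deg m$, so $\lf m \in W_{\deg m}(X)$ and one reduction step $m \rightarrow_X m'$ produces $m' \in M$ of strictly smaller degree. For (\ref{lst:redchar}) $\Rightarrow$ (\ref{lst:expchar}), concatenate the coefficients of $m_i$ across a reduction chain $m \rightarrow_X^{*} 0$; each step subtracts terms $r_j m_j$ with $\deg r_j m_j \le \deg m^{(\ell)} \le \deg m$, so the accumulated $r_i$'s satisfy $\deg r_i m_i \le \deg m$. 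For (\ref{lst:expchar}) $\Rightarrow$ (\ref{lst:macbasis}), apply (\ref{lst:expchar}) to an arbitrary $p \in M$ to get $p = \sum r_i m_i$ with $\deg r_i m_i \le \deg p$, and extract the $(\deg p)$-graded component: $\lf p = \sum_{i : \deg r_i m_i = \deg p} \lf r_i \cdot \lf m_i \in (\lf m_1, \dots, \lf m_n)$.

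The implication (\ref{lst:redchar}) $\Rightarrow$ (\ref{lst:buchberger}) is immediate since $\sum_i s_i m_i \in M$ for any syzygy. For (\ref{lst:buchberger}) $\Rightarrow$ (\ref{lst:expchar}), fix $p \in M$ and choose, among all representations $p = \sum r_i m_i$, one minimizing $b^{*} \coloneqq \max_i \deg r_i m_i$; this minimum exists by well-orderedness of $B$. If $b^{*} = \deg p$ we are done. Otherwise the $b^{*}$-graded component of $p$ vanishes, forcing the top contributions from $I = \{i : \deg r_i m_i = b^{*}\}$ to cancel, so $\sum_{i \in I} \lf r_i \cdot \lf m_i = 0$. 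Setting $s_i = \lf r_i$ for $i \in I$ and $s_i = 0$ otherwise yields an element of $\Syz(\lf m_1, \dots, \lf m_n)$; by (\ref{lst:buchberger}) the sum $\sum_i s_i m_i$ reduces to zero, and the already-established (\ref{lst:redchar}) $\Rightarrow$ (\ref{lst:expchar}) furnishes $\sum_i s_i m_i = \sum_i t_i m_i$ with $\deg t_i m_i < b^{*}$. Substituting this in $p = \sum r_i m_i$, after writing $r_i = \lf r_i + r_i^{\flat}$ for $i \in I$, produces a representation of $p$ with strictly smaller $b^{*}$, contradicting minimality. The bookkeeping in this final step is the real obstacle: cancellativity of $A$ is needed to isolate a single graded component, and monotonicity of the action of $A$ on $B$ is needed to ensure the substitution does not reintroduce any terms of degree $\ge b^{*}$.
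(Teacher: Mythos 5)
Your proposal is correct and follows essentially the same route as the paper: the same cycle $(\ref{lst:macbasis}) \Rightarrow (\ref{lst:redchar}) \Rightarrow (\ref{lst:expchar}) \Rightarrow (\ref{lst:macbasis})$, the same trivial $(\ref{lst:redchar}) \Rightarrow (\ref{lst:buchberger})$, and the same descent on $\max_i \deg r_i m_i$ for $(\ref{lst:buchberger}) \Rightarrow (\ref{lst:expchar})$, merely phrased as a minimal-counterexample argument rather than the paper's explicit induction. The one cosmetic caveat: when you invoke ``the already-established $(\ref{lst:redchar}) \Rightarrow (\ref{lst:expchar})$'' inside the descent, you really mean the local fact extracted from its proof (a single reduction chain $m \rightarrow_X^* 0$ yields a representation with $\deg t_i m_i \le \deg m$), not the global implication, since at that point you have not yet verified condition $(\ref{lst:redchar})$ for all of $M$.
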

\begin{proof}
(\ref{lst:macbasis}) $\implies$ (\ref{lst:redchar}) It suffices to show that if $m \in M$ is nonzero then we can always take a reduction step, since we can keep taking reduction steps until we reach 0. We have $\lf m \in (\lf m_{1}, \dots, \lf m_{n})$ since $m_{1}, \dots, m_{n}$ is a Macaulay basis, so $\lf m \in W_{\deg m}(m_{1}, \dots, m_{n})$ and thus we can take a reduction step.

(\ref{lst:redchar}) $\implies$ (\ref{lst:expchar}) Suppose $m \in M$, and consider the reduction algorithm as it reduces $m$ to 0. At each step it replaces $m$ with $m - \sum _{i = 1} ^{n} r_{i} m_{i}$ where $\deg r_{i}m_{i}$ is the current degree, so summing these expressions over all steps gives an expression $m - \sum _{i = 1} ^{n} r_{i}m_{i} = 0$ where $\deg r_{i}m_{i} \leq \deg m$.

(\ref{lst:redchar}) $\implies$ (\ref{lst:buchberger}) Clearly $\sum _{i = 1} ^{n} s_{i}m_{i} \in M$, so it reduces to zero.

(\ref{lst:expchar}) $\implies$ (\ref{lst:macbasis}) Suppose $m = \sum _{i = 1} ^{n} r_{i}m_{i}$ with $\deg r_{i} m_{i} \leq \deg m$. Put
\[ r_{i}' = \begin{cases}
\lf r_{i} & \text{if $\deg r_{i}m_{i} = \deg m$} \\
0 & \text{otherwise.}
\end{cases} \]
Then
\[ \lf m = \sum _{i = 1} ^{n} \lf r_{i}m_{i} = \sum _{i = 1} ^{n} r_{i}' \lf m_{i} \in (\lf m_{1}, \dots, \lf m_{n}). \]

(\ref{lst:buchberger}) $\implies$ (\ref{lst:expchar}) Let $m \in M$ with $m = \sum _{i = 1} ^{n} r_{i}m_{i}$. We use induction on $a = \max _{i} \deg r_{i}m_{i}$ to show that there are $s_{1}, \dots, s_{n}$ such that $m = \sum _{i = 1} ^{n} s_{i} m_{i}$ and $\deg s_{i}m_{i} \leq \deg m$. The base case $a = \deg m$ is clear. For the induction step, put
\[
r_{i}' = \begin{cases}
\lf r_{i} & \text{if $\deg r_{i}m_{i} = a$} \\
0 & \text{otherwise}.
\end{cases}
\]
Then $(r_{i}', \dots, r_{n}')$ is a syzygy of $(\lf m_{1}, \dots, \lf m_{n})$ since the degree $a$ homogeneous components of both $m$ and $\sum _{i = 1} ^{n} r_{i}m_{i}$ are 0. By assumption $\sum _{i = 1} ^{n} r_{i}'m_{i}$ reduces to 0, so we can find a representation $\sum _{i = 1} ^{n} r_{i}'m_{i} = \sum _{i = 1} ^{n} r_{i}''m_{i}$ with
\[ \deg r_{i}''m_{i} \leq \deg \sum _{i = 1} ^{n} r_{i}'m_{i} < \deg m \]
where the strict inequality holds because $(r_{1}', \dots, r_{i}')$ is a syzygy of $(\lf m_{1}, \dots, \lf m_{n})$. We can then apply the induction hypothesis to $m = \sum _{i = 1} ^{n} (r_{i} - r_{i}' + r_{i}'')m_{i}$ to get the desired representation.
\end{proof}

For Groebner bases and $H$-bases, it is known that an element of $m$ has a unique normal form with respect to the reduction algorithm \citelist{\cite{cocoa1}*{Proposition 2.2.5} \cite{ms2003}*{Lemma 6.2}}. When $N$ is a graded complemented vector space there also holds for Macaulay bases of modules, as shown in the following proposition.

\begin{proposition}
Suppose $N$ is a graded complemented vector space and let $X = \{m_{1}, \dots, m_{n}\} \subseteq M$ be a Macaulay basis of $M$. Then every $m \in N$ reduces to a unique normal form with respect to $X$, and the normal form is contained in
\[ \bigoplus _{b \in B} N_{b} \ominus W_{b}(X) \]
\end{proposition}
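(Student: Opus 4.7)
The plan is to prove existence and uniqueness of the normal form separately, both resting on the observation that an element $\tilde m \in N$ is reduced with respect to $\Rightarrow_X$ if and only if $\tilde m_b \in W_b(X)^c$ for every $b \in B$. One direction is immediate from the definition of $\Rightarrow_X$; for the other, given a $\tilde m$ with some $\tilde m_{b'} \notin W_{b'}(X)^c$ I would take the maximum such $b'$, call it $b$, decompose $\tilde m_b = w + w'$ along the splitting $N_b = W_b(X) \oplus W_b(X)^c$, express $w = \sum_j r_j(\lf m_j)$ with $\deg r_j m_j = b$ (possible because $w \in W_b(X)$), and observe that this exhibits a valid reduction step $\tilde m \Rightarrow_X \tilde m - \sum_j r_j m_j$.

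Existence of a normal form then follows at once from the Noetherianness of $\Rightarrow_X$ established in the preceding lemma: any maximal reduction chain starting at $m$ terminates at some reduced $\tilde m$, and by the characterization above $\tilde m \in \bigoplus_{b \in B}(N_b \ominus W_b(X))$.

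For uniqueness, I would suppose $m \Rightarrow_X^* m_1$ and $m \Rightarrow_X^* m_2$ with both $m_1$ and $m_2$ reduced. A single reduction step replaces its input with the difference of that input and an $R$-linear combination of elements of $X \subseteq M$, so an induction on the chain lengths gives $m - m_1, m - m_2 \in M$ and hence $m_1 - m_2 \in M$. Suppose for contradiction that $m_1 \neq m_2$ and set $b = \deg(m_1 - m_2)$. The Macaulay basis hypothesis gives $\lf(m_1 - m_2) \in (\lf m_1, \dots, \lf m_n)$, and unwinding the definition of $W_b(X)$ shows that the degree-$b$ homogeneous component of this submodule is precisely $W_b(X)$, so $\lf(m_1 - m_2) \in W_b(X)$. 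On the other hand, $(m_1)_b$ and $(m_2)_b$ both lie in $W_b(X)^c$ because $m_1$ and $m_2$ are reduced, so $\lf(m_1 - m_2) = (m_1)_b - (m_2)_b$ lies in $W_b(X)^c$. The intersection $W_b(X) \cap W_b(X)^c = 0$ then forces $\lf(m_1 - m_2) = 0$, contradicting the choice of $b = \deg(m_1 - m_2)$.

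The main subtlety I expect to confront is the identification of $W_b(X)$ with the degree-$b$ homogeneous component of the submodule $(\lf m_1, \dots, \lf m_n)$: this is the bridge between the Macaulay basis hypothesis (phrased in terms of the submodule of leading forms) and the reduction machinery (phrased in terms of the complemented spaces $W_b(X)^c$), and without it neither half follows cleanly. The verification itself is a routine unwinding using that each $\lf m_i$ is homogeneous, so any degree-$b$ element of the submodule is a $\mathbf{k}$-linear combination of terms $r(\lf m_i)$ with $\deg r m_i = b$.
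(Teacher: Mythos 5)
Your proposal is correct and follows essentially the same route as the paper: existence from Noetherianness of $\Rightarrow_X$, and uniqueness from the observation that the difference of two normal forms lies in $M$ and is simultaneously reduced. The only stylistic difference is in how the final contradiction is phrased. The paper invokes Theorem~\ref{thm:conditions} to say $m' - m''$ reduces to $0$ and then observes it is already reduced (hence must equal $0$), whereas you argue directly from the definition of Macaulay basis that $\lf(m'-m'')$ lies in $W_b(X)$, intersect this with $W_b(X)^c$ from reducedness, and conclude $\lf(m'-m'') = 0$. These are logically equivalent; yours unpacks the identification of $W_b(X)$ with the degree-$b$ homogeneous component of $\lf M$, which the paper leaves implicit inside Theorem~\ref{thm:conditions}. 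One small caution: you reuse $m_1, m_2$ both for the two candidate normal forms and for elements of the Macaulay basis $X = \{m_1, \dots, m_n\}$, which makes the line ``$\lf(m_1 - m_2) \in (\lf m_1, \dots, \lf m_n)$'' ambiguous as written; relabel the normal forms (say $m', m''$ as the paper does) to avoid the collision.
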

\begin{proof}
Suppose that $X$ is a Macaulay basis, and let $m \in N$. Since $\Rightarrow_{X}^{*}$ is Noetherian, there is at least one normal form $m'$ such that $m \Rightarrow_{X}^{*} m'$. Then $m - m' \in M$. If $m''$ is another normal form, then $m - m'' \in M$, so $m' - m'' \in M$. Since $X$ is a Macaulay basis $m' - m''$ reduces to 0, and since it cannot be reduced we must have $m' - m'' = 0$. From the definition of the reduction algorithm, we can easily see that
\[ m' \in \bigoplus _{b \in B} N_{b} \ominus W_{b}(X) \]
since otherwise we would be able to take another reduction step.
\end{proof}

The result on normal forms allows us to compute canonical $\mathbf{k}$-bases for the quotient $N/M$.

\begin{corollary} \label{cor:normform}
If $X \subseteq M$ is a Macaulay basis and $N$ is a complemented graded vector space, then
\[ N = M \oplus \bigoplus _{b \in B} N_{b} \ominus W_{b}(X). \]
\end{corollary}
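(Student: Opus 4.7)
The plan is to deduce this directly from the uniqueness-of-normal-forms proposition that immediately precedes it. Let me write $C \coloneqq \bigoplus_{b \in B} N_{b} \ominus W_{b}(X)$ for brevity, so the goal is to establish $N = M \oplus C$.

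For the sum decomposition $N = M + C$, I would fix $m \in N$ and run the reduction relation $\Rightarrow_{X}^{*}$ on it. Since $\Rightarrow_{X}$ is Noetherian, this terminates at some normal form $m' \in N$, and by the preceding proposition $m' \in C$. Now, every individual reduction step replaces an element by that element minus $\sum_{j} r_{j} m_{j}$ with $m_{j} \in X \subseteq M$, so the cumulative correction $m - m'$ is a finite sum of elements of $M$ and hence lies in $M$. Thus $m = (m - m') + m'$ exhibits $m$ as an element of $M + C$.

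For the triviality of intersection $M \cap C = 0$, suppose $m \in M \cap C$ and assume for contradiction that $m \neq 0$. Because $m \in C$, each graded component $m_{b}$ lies in $W_{b}(X)^{c}$, so the set $\{b' \leq \deg m : m_{b'} \notin W_{b'}(X)^{c}\}$ is empty, which means no step of $\Rightarrow_{X}$ can be applied to $m$. Hence $m$ is already in normal form. On the other hand, since $X$ is a Macaulay basis and $m \in M$, Theorem \ref{thm:conditions}\ref{lst:redchar} together with Proposition \ref{prop:reduction-equiv} gives $m \Rightarrow_{X}^{*} 0$. Normal forms are unique by the preceding proposition applied to $m \in N$, so the fact that both $m$ and $0$ are normal forms of $m$ forces $m = 0$, a contradiction.

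I do not expect any real obstacle here: the work has already been done in the uniqueness proposition and in Theorem \ref{thm:conditions}. The only minor subtlety is making the argument that $m - m' \in M$ precise, which just requires observing that each step of the reduction relation subtracts off an $R$-linear combination of elements of $X$, hence stays in $M$; and checking that an element of $C$ admits no further reduction step, which is immediate from the definition of the relation $\Rightarrow_{X}$ (the maximum is taken over an empty set).
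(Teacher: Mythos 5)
Your proof is correct and takes exactly the route the paper intends: the paper states this as a corollary of the uniqueness-of-normal-forms proposition and gives no explicit proof, and your argument supplies precisely the implicit derivation — existence of normal forms gives $N = M + C$, and uniqueness (combined with Theorem~\ref{thm:conditions}(\ref{lst:redchar}) and Proposition~\ref{prop:reduction-equiv}) gives $M \cap C = 0$.
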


In particular, this allows us to compute a basis for the quotient ring $R/I$ when we have a Macaulay basis of an ideal $I$. If $R$ is a polynomial ring and $I$ is zero-dimensional, then combining this with the M\"{o}ller-Stetter algorithm \cite{moeller-stetter} and any standard eigenvalue algorithm shows how Macaulay bases can be used to solve zero-dimensional polynomial systems. 

\begin{remark}
We have not characterized Macaulay bases in terms of the uniqueness of normal forms or confluence of the reduction relation, but only shown how a Macaulay basis allows us to compute unique normal forms. This is because the reduction relation in Definition~\ref{def:reduction} always reduces the terms of highest possible degree, making the reduction completely deterministic. If desired, standard results on term rewriting systems in general, such as the ones found in  \cite{cocoa1}, will give alternative characterizations of Macaulay bases in terms of confluence or the existence of unique normal forms for nondeterministic analogues of the reduction relation.
\end{remark}

\subsection{Refinements of gradings}
One can observe that a Groebner basis with respect to a degree-compatible monomial order is also a Macaulay $H$-basis. Intuitively this holds because the grading given by a degree-compatible monomial order ``refines'' the standard grading. We make precise the notion of refinement and generalize this observation. The notion of refinement will also be useful for generalizing Buchberger's algorithm to Macaulay bases.

\begin{definition}
Suppose $R$ has two given gradings by $A$ and $A'$. Then the grading by $A$ is a \textbf{refinement} (or \textbf{refines}) the grading by $A'$ if there is an order-preserving monoid homomorphism $f : A \to A'$ such that $R_{a'} = \bigoplus _{a \in f^{-1}(\{a'\})} R_{a}$.

For modules, assume that $R$ has a grading by $A$ which refines a grading by $A'$. If $B$ and $B'$ are just totally ordered sets, and $M$ has two gradings by $B$ and $B'$, then the definition of refinement is the same. When $B$ and $B'$ have appropriate actions by $A$ and $A'$ respectively, then we require the compatibility condition
\[ f(a) \cdot g(b) = g(a \cdot b) \]
for all $a \in A$ and $b \in B$, where $f : A \to A'$ and $g : B \to B'$ are the refinement maps. 
\end{definition}

\begin{proposition}
Suppose $m_{1}, \dots, m_{n}$ is a Macaulay basis of $M$ with respect to a grading by $B$ which refines a grading by $B'$. Then $m_{1}, \dots, m_{n}$ is also a Macaulay basis with respect to the grading by $B'$.
\end{proposition}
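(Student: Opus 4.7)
The plan is to invoke the equivalence $(1) \Leftrightarrow (3)$ of Theorem \ref{thm:conditions}, which characterizes Macaulay bases via the existence of expansions with controlled degrees. Condition (3) is exactly the kind of property that transfers along a weakly order-preserving map, since order-preserving maps commute with $\max$. So the argument has two stages: first establish how $\deg_{B'}$ relates to $\deg_{B}$ via the refinement map $g : B \to B'$, then push the expansion through.

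The key auxiliary fact I would prove first is that $\deg_{B'} m = g(\deg_{B} m)$ for every nonzero $m \in M$. To see this, decompose $m = \sum_{b \in B} m_{b}$ in the $B$-grading and regroup summands by the fibers of $g$; the refinement condition $M_{b'} = \bigoplus_{b \in g^{-1}(b')} M_{b}$ means that this regrouping is precisely the $B'$-decomposition of $m$, and that within each $M_{b'}$ the $m_{b}$'s sit in linearly independent summands. Writing $b_{0} = \deg_{B} m$, the $g(b_{0})$-component of $m$ contains the nonzero summand $m_{b_{0}}$ and is therefore nonzero; conversely, for any $b' > g(b_{0})$, every $b$ with $g(b) = b'$ must satisfy $b > b_{0}$ (otherwise $b \leq b_{0}$ would force $g(b) \leq g(b_{0}) < b'$), so $m_{b} = 0$ and the $b'$-component of $m$ vanishes.

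With that identity in hand, suppose $m_{1}, \dots, m_{n}$ is a Macaulay basis with respect to the $B$-grading. For any $p \in M$, Theorem \ref{thm:conditions}(3) applied to the $B$-grading supplies $r_{1}, \dots, r_{n} \in R$ with $p = \sum_{i} r_{i} m_{i}$ and $\deg_{B} r_{i} m_{i} \leq \deg_{B} p$ for each $i$. Applying $g$, the auxiliary identity yields
\[ \deg_{B'} r_{i} m_{i} = g(\deg_{B} r_{i} m_{i}) \leq g(\deg_{B} p) = \deg_{B'} p, \]
which is exactly the hypothesis of condition (3) for the $B'$-grading. Applying the equivalence $(3) \Rightarrow (1)$ of Theorem \ref{thm:conditions} in the $B'$-grading concludes that $m_{1}, \dots, m_{n}$ is a Macaulay basis with respect to $B'$.

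The only nontrivial step is the identity $\deg_{B'} m = g(\deg_{B} m)$, which is where both the order-preservation of $g$ and the direct-sum refinement decomposition are used; after that the argument is a mechanical unwinding of the characterization theorem and in particular no re-examination of syzygies or of the reduction algorithm is needed.
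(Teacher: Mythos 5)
Your proposal is correct and follows essentially the same route as the paper: both arguments hinge on the identity $\deg_{B'}(m) = g(\deg_{B}(m))$ derived from the refinement decomposition, and both then push the degree-bounded expansion from Theorem~\ref{thm:conditions}(3) through $g$ to conclude via the equivalence of conditions. You spell out the verification of the degree identity in more detail than the paper does, but the underlying idea is identical.
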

\begin{proof}
Let $m \in M$. Since $m_{1}, \dots, m_{n}$ is a Macaulay basis with respect to $B$, there are $r_{1}, \dots, r_{n} \in R$ such that $m = \sum _{i = 1} ^{n} r_{i}m_{i}$ and $\deg_{B}(m) \geq \deg_{B}(r_{i}m_{i})$ for all $i$. Let $f : A \to A'$ and $g : B \to B'$ be monotone homomorphisms as in the definition of refinement. If $m_{b}$ denotes the homogeneous component of an arbitrary $m \in M$ at $b \in B$ then the homogeneous component at $b' \in B'$ is $\sum _{b \in f^{-1}(\{b'\})} m_{b}$, so $\deg_{B'}(m) = g(\deg_{B}(m))$. Thus
\[
\deg_{B'}(m) = g(\deg_{B}(m)) \geq g(\deg_{B}(r_{i}m_{i})) = \deg_{B'}(r_{i}m_{i})
\]
for all $i$. By Theorem \ref{thm:conditions}, $m_{1}, \dots, m_{n}$ is a Macaulay basis with respect to the grading by $B'$.
\end{proof}

In order to generalize Buchberger's algorithm and Schreyer's theorem, we will need to define a special grading on syzygy modules of homogeneous elements. Our definition is in terms of refinements.

\begin{example}
If $A = \mathbb{N}^{d}$ has an order given by a degree-compatible monomial order and $S$ is a polynomial ring with the standard grading by $B = \mathbb{N}$, then $A$ refines $B$ via the homomorphism $f(a) = \sum _{i = 1} ^{d} a_{i}$. Monotonicity of $f$ is equivalent to the statement that the monomial order is degree-compatible.
\end{example}

\begin{definition}
\label{def:shiftgrade}
Suppose $b_{1}, \dots, b_{n} \in B$ is a finite sequence. Then we define a grading on $R^{\oplus n}$ that refines the grading by $A^{\oplus n}$ via the homomorphism $f : A^{\oplus n} \to B$ given by $f(a_{1}, \dots, a_{n}) = \sum _{i = 1} ^{n} a_{i}b_{i}$, which gives
\[ (R^{\oplus n})_{b} = \bigoplus _{c \in f^{-1}(\{b\})} (R^{\oplus n})_{c} \]
for all $b \in B$.
\end{definition}

\begin{lemma}
\label{lem:gradsyz}
Suppose $m_{1}, \dots, m_{n} \in M$ are homogeneous. Then $\Syz(m_{1}, \dots, m_{n})$ is a graded submodule of $R^{\oplus n}$ when equipped with grading defined in Definition~\ref{def:shiftgrade} associated to the sequence $\deg m_{1}, \dots, \deg m_{n}$.
\end{lemma}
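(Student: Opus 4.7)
The plan is to show that whenever $s = (s_1, \dots, s_n) \in \Syz(m_1, \dots, m_n)$, every $B$-homogeneous component of $s$ (with respect to the grading of Definition~\ref{def:shiftgrade}) is again a syzygy of $m_1, \dots, m_n$. Since $\Syz$ is already an $R$-submodule of $R^{\oplus n}$, this is exactly what is needed to conclude that it is a graded submodule.

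First I would write down the $B$-homogeneous component of an arbitrary element $s = (s_1, \dots, s_n) \in R^{\oplus n}$ explicitly. Decomposing each coordinate $s_i = \sum_{a \in A} s_{i,a}$ into its $A$-homogeneous pieces, the element $s_{i,a}\,e_i$ sits in the $A^{\oplus n}$-graded piece indexed by $a$ concentrated in slot $i$, and hence, under the refinement $f(a_1, \dots, a_n) = \sum_j a_j b_j$, in the $B$-graded piece indexed by $a \cdot b_i$. Thus the $b$-graded component of $s$ is the finite sum
\[ s^{(b)} = \sum_{i=1}^{n} \sum_{\substack{a \in A \\ a \cdot b_i = b}} s_{i,a}\, e_i. \]

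Next, using the hypothesis $\sum_i s_i m_i = 0$, I would expand via the $A$-decomposition of each $s_i$ and regroup by total $B$-degree inside the ambient graded module $N$. Since each $m_i$ is homogeneous of degree $b_i$, every product $s_{i,a} m_i$ lies in $N_{a \cdot b_i}$, so the $b$-graded component of $\sum_i s_i m_i$ in $N$ is precisely $\sum_i s^{(b)}_i m_i$. Uniqueness of the $B$-homogeneous decomposition of $0 \in N$ then forces this expression to vanish for every $b$, which is exactly the assertion that $s^{(b)} \in \Syz(m_1, \dots, m_n)$.

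The whole argument is a bookkeeping exercise built around the refinement map $f$, and I expect the only mildly delicate point to be writing down the correct formula for $s^{(b)}$ from the coarsening of the $A^{\oplus n}$-grading; once that is in hand, the homogeneity of each $m_i$ aligns the $B$-grading of $R^{\oplus n}$ with the $B$-grading of $N$ so tightly that the conclusion is immediate from uniqueness of homogeneous decomposition in $N$.
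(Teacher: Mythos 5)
Your proposal is correct and follows essentially the same route as the paper's proof: both decompose $s$ into its $B$-homogeneous components via the refinement map $f$, observe that the homogeneity of the $m_i$ aligns the $B$-grading of $R^{\oplus n}$ with that of $N$ so that the $b$-component of $\sum_i s_i m_i$ is $\sum_i s^{(b)}_i m_i$, and conclude from uniqueness of the homogeneous decomposition of $0$. Your explicit formula $s^{(b)} = \sum_i \sum_{a \cdot b_i = b} s_{i,a}\, e_i$ is a cleaner spelling-out of the same indexing the paper compresses into sums over $f^{-1}(\{b\})$.
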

\begin{proof}
Let $s = (s_{1}, \dots, s_{n}) \in \Syz(m_{1}, \dots, m_{n})$. Then $0 = \sum _{i = 1} ^{n} s_{i}m_{i}$. Equating homogeneous components on both sides gives
\[ 0 = \left( \sum _{i = 1} ^{n} s_{i}m_{i} \right)_{b} = \sum _{(a_{1}, \dots, a_{n}) \in f^{-1}(\{b\})} \sum _{i = 1} ^{n} (s_{i})_{a_{i}}m_{i} \]
so
\[
s_{b} = \sum _{(a_{1}, \dots, a_{n}) \in f^{-1}(\{b\})} ((s_{1})_{a_{1}}, \dots, (s_{n})_{a_{n}}) \in \Syz(m_{1}, \dots, m_{n})
\]
where $s_{b}$ is the degree-$b$ homogeneous component of $s$.
\end{proof}

Since $\lf m_{1}, \dots, \lf m_{n}$ are always homogeneous, the above proposition shows\[ \Syz(\lf m_{1}, \dots, \lf m_{n}) \leq R^{\oplus n} \]
is a graded submodule for any $m_{1}, \dots, m_{n} \in M$.

\subsection{Buchberger's algorithm}
We can weaken condition~\ref{lst:buchberger} of Theorem \ref{thm:conditions} slightly to obtain a generalization of Buchberger's criterion and algorithm, which was already generalized to $H$-bases in \cite{ms2003}*{Theorem 4.2}.

\begin{theorem}[Buchberger's criterion]
\label{thm:criterion}
Fix a generating set $s_{1}, \dots, s_{k}$ of the module $\Syz(\lf m_{1}, \dots, \lf m_{n})$ where every element is homogeneous with respect to the grading defined in Lemma~\ref{lem:gradsyz}. Then $X = \{m_{1}, \dots, m_{n}\}$ is a Macaulay basis of $M$ iff $\sum _{i = 1} ^{n} s_{ji} m_{i} \rightarrow_{X}^{*} 0$ for all $j$.
\end{theorem}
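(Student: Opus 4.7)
The forward direction is immediate: if $X$ is a Macaulay basis, then by Theorem~\ref{thm:conditions} every syzygy of $(\lf m_{1}, \dots, \lf m_{n})$ yields a combination that reduces to zero, and the $s_{j}$ are in particular such syzygies.

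For the converse, my plan is to verify condition~(\ref{lst:expchar}) of Theorem~\ref{thm:conditions}. Fix $m \in M$ with some representation $m = \sum_{i=1}^{n} r_{i}m_{i}$, and induct on $a = \max_{i} \deg(r_{i}m_{i})$, which is well-defined because the order on $B$ is well-founded. The base case $a = \deg m$ is immediate. For the inductive step, imitate the argument (\ref{lst:buchberger}) $\implies$ (\ref{lst:expchar}) from Theorem~\ref{thm:conditions}: set $r_{i}' = \lf r_{i}$ if $\deg r_{i}m_{i} = a$ and $r_{i}' = 0$ otherwise, so that $s = (r_{1}', \dots, r_{n}') \in \Syz(\lf m_{1}, \dots, \lf m_{n})$ is homogeneous in the refined grading of Lemma~\ref{lem:gradsyz} at the degree corresponding to $a$.

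The key new ingredient is to decompose $s$ using only the chosen generating syzygies. Since the $s_{j}$ are homogeneous generators and the syzygy module is graded (Lemma~\ref{lem:gradsyz}), the degree-$a$ component of the syzygy module equals $\sum_{j} R_{a_{j}} s_{j}$, where $a_{j} \in A$ is (by cancellativity) the unique element with $a_{j} \cdot \deg s_{j} = a$ when it exists. Hence we can write $s = \sum_{j} t_{j} s_{j}$ with each $t_{j}$ homogeneous and $t_{j}s_{j}$ of the same refined degree as $s$. For each $j$, applying the pointwise content of (\ref{lst:redchar}) $\implies$ (\ref{lst:expchar}) from Theorem~\ref{thm:conditions} to the hypothesis $v_{j} \coloneqq \sum_{i} s_{ji}m_{i} \rightarrow_{X}^{*} 0$ yields a representation $v_{j} = \sum_{i} r_{ji}''m_{i}$ with $\deg(r_{ji}''m_{i}) \leq \deg v_{j}$.

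Finally, assemble the pieces: $\sum_{i} r_{i}'m_{i} = \sum_{j} t_{j}v_{j} = \sum_{i}\bigl(\sum_{j} t_{j}r_{ji}''\bigr)m_{i}$, and substituting into $m = \sum_{i} (r_{i} - r_{i}')m_{i} + \sum_{i} r_{i}'m_{i}$ gives a new expression for $m$. Because $s_{j}$ is a syzygy of leading forms, $\deg v_{j}$ is strictly less than the refined degree of $s_{j}$, so each $\deg(t_{j}r_{ji}''m_{i})$ is strictly less than $a$; combined with $\deg((r_{i} - r_{i}')m_{i}) < a$, this strictly decreases the witness degree, and the induction terminates at $a = \deg m$. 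The main obstacle is the bookkeeping in this step: one must ensure that the refined grading from Definition~\ref{def:shiftgrade} lets us match the homogeneous generators $s_{j}$ to $s$ in such a way that the coefficients $t_{j}$ carry the right degree in $A$, which is what forces the strict decrease after substitution.
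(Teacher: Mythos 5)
Your forward direction is the same as the paper's. For the converse you take a genuinely different, though closely related, route. The paper verifies condition~(\ref{lst:buchberger}) of Theorem~\ref{thm:conditions}: given an arbitrary $s \in \Syz(\lf m_{1}, \dots, \lf m_{n})$, it expands each homogeneous component $s_{b}$ as $\sum_{j} r_{j}s_{j}$, and then appeals to the asserted facts that the set of elements reducing to zero is closed under multiplication by ring elements and under $\mathbf{k}$-linear combinations, concluding $\sum_{i} s_{i}m_{i} \rightarrow_{X}^{*} 0$ in two lines. You instead verify condition~(\ref{lst:expchar}) directly, by re-running the induction from the paper's (\ref{lst:buchberger}) $\implies$ (\ref{lst:expchar}) step and inserting the decomposition $s = \sum_{j} t_{j}s_{j}$ at the point where the homogeneous leading syzygy appears. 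The shared key ingredient in both proofs is that a homogeneous syzygy of leading forms decomposes over the chosen homogeneous generators; where they differ is in what else they require. The paper's route is shorter, but leans on the linearity and multiplicativity of $\rightarrow_{X}^{*}$, which are stated as ``easily seen'' but not proved and are not entirely immediate from Definition~\ref{def:reduction}. Your route avoids those closure properties entirely by working only with the one-directional, pointwise implication that $v \rightarrow_{X}^{*} 0$ yields a representation $v = \sum_{i} r_{i}m_{i}$ with $\deg(r_{i}m_{i}) \leq \deg v$; the cost is the explicit degree bookkeeping, in particular the strict inequality $\deg(t_{j}r_{ji}''m_{i}) < a$, which rests on $\deg v_{j} < \deg s_{j}$ together with (strict) monotonicity of the $A$-action on $B$ --- the same degree estimate the paper uses in its proof of (\ref{lst:buchberger}) $\implies$ (\ref{lst:expchar}), so this is not an additional assumption. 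Both arguments, like the paper's proof of Theorem~\ref{thm:conditions}, silently assume $X$ generates $M$ in order to start from some representation $m = \sum_{i} r_{i}m_{i}$; that is a pre-existing gap in the source, not something you introduced.
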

\begin{proof}
If $X$ is a Macaulay basis, then $\sum _{i = 1} ^{n} s_{ji}m_{i}$ is an element of $M$ and thus reduces to 0 by Theorem~\ref{thm:conditions}. Now suppose that $\sum _{i = 1} ^{n} s_{ji}m_{i} \rightarrow_{X}^{*} 0$ for all $j$. Let $s \in \Syz(\lf m_{1}, \dots, \lf m_{n})$. We will show that $\sum _{i = 1} ^{n} s_{i}m_{i} \rightarrow_{X}^{*} 0$, which by Theorem~\ref{thm:conditions} implies that $X$ is a Macaulay basis.

The reduction relation is easily seen to be $\mathbf{k}$-linear, so it suffices to show that $\sum _{i = 1} ^{n} (s_{b})_{i}m_{i} \rightarrow_{X}^{*} 0$ for all $b \in B$ and then sum over all $b$. Let $s_{b} = \sum _{j = 1} ^{k} r_{j}s_{j}$, where $r_{j}s_{j}$ is homogeneous of degree $b$. Since $\sum _{i = 1} ^{n} (s_{j})_{i}m_{i} \rightarrow_{X}^{*} 0$ we also have $r_{j}\sum _{i = 1} ^{n} (s_{j})_{i}m_{i} \rightarrow_{X}^{*} 0$. Summing over $j$ shows that $\sum _{i = 1} ^{n} (s_{b})_{i}m_{i} \rightarrow_{X}^{*} 0$.
\end{proof}

The following algorithm generalizes Buchberger's algorithm for Groebner bases of ideals and modules as well as Sauer's analogue in \cite{ms2003} for $H$-bases of ideals.

\begin{algorithm}[Buchberger's algorithm]
\label{alg:buchberger}
Begin with the set $X$ initialized to a provided generating set of $M$. 
\begin{enumerate}
\item Compute a homogeneous generating set $s_{1}, \dots, s_{k} \in \Syz(\lf m_{1}, \dots, \lf m_{n})$, where $X = \{m_{1}, \dots, m_{n}\}$.
\item For each $s_{j} = (s_{j1}, \dots, s_{jn})$, reduce $\sum _{i = 1} ^{n} s_{ji}m_{i}$ to a normal form $m'_{j}$.
\item Set $Y = \{m'_{j} : 1 \leq j \leq k, m'_{j} \neq 0\}$. If $Y = \emptyset$, then halt and output $X$. Otherwise, set $X = X \cup Y$ and go back to step 1.
\end{enumerate}
\end{algorithm}

\begin{theorem}
Assuming a black-box for computing syzygy modules, Algorithm~\ref{alg:buchberger} terminates in a finite amount of time, and its output is a Macaulay basis of $M$.
\end{theorem}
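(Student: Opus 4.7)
The plan is to establish two things: that when Algorithm~\ref{alg:buchberger} halts the output is a Macaulay basis of $M$, and that the algorithm does halt after finitely many iterations.

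For correctness, I would first observe that $X$ remains a generating set of $M$ throughout. Every element $m'_j$ added to $X$ is a normal form of some $\sum_i s_{ji} m_i$, and hence equals $\sum_i s_{ji} m_i$ minus an $R$-linear combination of existing elements of $X$; so $m'_j$ lies in the submodule generated by the current $X$, which inductively equals $M$. When the algorithm halts, $Y = \emptyset$ says that every $\sum_i s_{ji} m_i$ already reduces to $0$ with respect to $X$. Since $s_1, \dots, s_k$ was chosen as a homogeneous generating set of $\Syz(\lf m_1, \dots, \lf m_n)$, Theorem~\ref{thm:criterion} applies and yields that the output is a Macaulay basis of $M$.

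For termination, the strategy is to track an ascending chain of leading form submodules of $N$. Let $X^{(t)}$ denote the generating set at the start of iteration $t$, and set
\[ L^{(t)} \coloneqq (\lf m : m \in X^{(t)}) \leq N. \]
If the algorithm has not halted at step $t$, some nonzero $m'_j$ is added to $X^{(t)}$. Because $m'_j$ is a normal form it cannot be reduced further, so by Definition~\ref{def:reduction} we have $\lf(m'_j) \notin W_{\deg m'_j}(X^{(t)})$. But by homogeneity of each $\lf m_i$, the subspace $W_b(X^{(t)})$ is exactly the degree-$b$ graded component of $L^{(t)}$, and hence $\lf(m'_j) \in L^{(t+1)} \setminus L^{(t)}$. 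Under the standing assumption that $R$ is Noetherian and $N$ finitely generated, $N$ is a Noetherian $R$-module, so the strictly ascending chain $L^{(0)} \subsetneq L^{(1)} \subsetneq \cdots$ must stabilize after finitely many steps, forcing $Y = \emptyset$ at some iteration.

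The main obstacle is the strictness $L^{(t)} \subsetneq L^{(t+1)}$. This requires translating the condition ``$m'_j$ is reduced with respect to $X^{(t)}$'' (a statement about the reduction relation of Definition~\ref{def:reduction}) into the statement ``$\lf(m'_j) \notin L^{(t)}$'' (a statement about leading form submodules). The translation uses only the definition of $W_b(X^{(t)})$ together with homogeneity of the $\lf m_i$, so it is essentially bookkeeping; once in hand, Noetherianity of $N$ supplies termination immediately, and Buchberger's criterion supplies correctness.
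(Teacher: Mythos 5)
Your proposal is correct and follows essentially the same route as the paper: correctness via Theorem~\ref{thm:criterion} applied at the halting condition, and termination by observing that the submodule $(\lf m : m \in X)$ strictly increases on each iteration and invoking Noetherianity of $N$. You spell out two details the paper leaves implicit — that $X$ remains a generating set of $M$ throughout (which is needed for Buchberger's criterion to apply), and the translation from ``$m'_j$ is reduced'' to ``$\lf m'_j$ lies outside the current leading-form submodule,'' using the identification of $W_b(X)$ with the degree-$b$ graded piece of $(\lf X)$ — but the underlying argument is the same.
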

\begin{proof}
To see that the algorithm terminates, observe that the submodule $(\lf X)$ strictly increases at each step, since each $m'_{j} \in Y$ is reduced to a normal form and is nonzero. This gives a strictly increasing chain of submodules of $N$ for each iteration of the algorithm, and since $N$ is Noetherian the chain must eventually terminate. Correctness follows from Theorem~\ref{thm:criterion}, since the algorithm only halts when every reduction reaches 0.
\end{proof}

Explicitly bounding the running time of Algorithm~\ref{alg:buchberger} is nontrivial in general. Computing syzygy modules is nontrivial, so it should be assigned an appropriate cost. More refined information on the structure of the syzygy modules appearing may also be needed in order to give a useful bound. However, many bounds in the literature on Groebner bases only bound some partial aspect of Buchberger's algorithm, such as the degree or number of elements of the final Groebner basis and output, or the degree of all polynomials appearing in intermediate steps. Extending these to Macaulay bases and $H$-bases would be of interest and may not require using as much information on the computation or structure of the syzygy modules.

Doubly exponential lower bounds are known on the computation of syzygy modules \cite{Bayer_Stillman_1988}, which imply doubly exponential degree lower bounds for both Groebner bases and Macaulay $H$-bases.

\begin{example}
Consider the grading of $S^{\oplus n}$ by $(\mathbb{N}^{d})^{\oplus n}$ with an appropriate term order. If $m_{1}, \dots, m_{n}$ are monomials in the free module $S^{\oplus n}$, then it is well-known that $\Syz(m_{1}, \dots, m_{n})$ is generated by
\[ \left\{ e_{i}\frac{\operatorname{lcm}(m_{i}, m_{j})}{m_{i}}  - e_{j}\frac{\operatorname{lcm}(m_{i}, m_{j})}{m_{j}} \middle| 1 \leq i, j \leq n \right\} \]
which corresponds exactly to an $S$-pair in the usual version of Buchberger's algorithm for ideals and modules over a polynomial ring. However, using Buchberger's algorithm for other gradings requires the ability to compute generators of $\Syz(\lf m_{1}, \dots, \lf m_{n})$, which in general is nontrivial.
\end{example}

\subsection{Lifting of syzygies}
With the setup of the generalized Buchberger algorithm in place, we can state an equivalent definition of Macaulay bases in terms of lifting of syzygies, as with Groebner bases.

\begin{definition}
Let $X = \{m_{1}, \dots, m_{n}\}$ generate $M$. A \textbf{lift} of a syzygy $s \in \Syz(\lf m_{1}, \dots, \lf m_{n})$ is a syzygy $t \in \Syz(m_{1}, \dots, m_{n})$ such that $\lf t = s$ with respect to the grading of Lemma~\ref{lem:gradsyz}.
\end{definition}

\begin{proposition}
A generating set $X = \{m_{1}, \dots, m_{n}\}$ of $M$ is a Macaulay basis iff every homogeneous syzygy in $\Syz(\lf m_{1}, \dots, \lf m_{n})$ has a lift.
\end{proposition}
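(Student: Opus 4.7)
The plan is to deduce both implications from Theorem~\ref{thm:conditions}, particularly the equivalent characterization (\ref{lst:expchar}) of Macaulay bases by degree-bounded representations.

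For the forward direction, assume $X$ is a Macaulay basis and let $s = (s_{1}, \dots, s_{n})$ be a homogeneous syzygy of $\lf m_{1}, \dots, \lf m_{n}$ of degree $b$ in the grading of Lemma~\ref{lem:gradsyz}. The element $m := \sum_{i} s_{i}m_{i}$ lies in $M$, and since $\sum_{i} s_{i}\lf m_{i} = 0$ the degree-$b$ homogeneous component of $m$ vanishes, giving $\deg m < b$. Applying condition~(\ref{lst:expchar}) of Theorem~\ref{thm:conditions} produces an expression $m = \sum_{i} r_{i}m_{i}$ with $\deg r_{i}m_{i} \leq \deg m < b$. Then $t := (s_{1} - r_{1}, \dots, s_{n} - r_{n})$ lies in $\Syz(m_{1}, \dots, m_{n})$, and I would check that $\lf t = s$ in the shifted grading by observing that after discarding any homogeneous components of $r_{i}$ that annihilate $m_{i}$ (which does not affect the sum), the degree bound $\deg r_{i}m_{i} < b$ forces every homogeneous piece of $r_{i}$ contributing at the shifted degree $b$ to vanish.

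For the backward direction, assume every homogeneous syzygy of the leading forms has a lift. I would verify condition~(\ref{lst:expchar}) of Theorem~\ref{thm:conditions}. Given $m \in M$, write $m = \sum_{i} r_{i}m_{i}$ and let $a := \max_{i} \deg r_{i}m_{i}$; I proceed by induction on $a$, which is well-founded since $B$ is well-ordered. If $a \leq \deg m$ there is nothing to do. If $a > \deg m$, let $r_{i}'$ be the homogeneous part of $r_{i}$ that contributes to degree $a$ in $r_{i}m_{i}$; then $(r_{1}', \dots, r_{n}')$ is a homogeneous syzygy of $(\lf m_{1}, \dots, \lf m_{n})$ of shifted degree $a$, because the degree-$a$ component of $\sum_{i} r_{i}m_{i} = m$ vanishes. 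Lifting it to $t \in \Syz(m_{1}, \dots, m_{n})$ with $\lf t = r'$ and replacing each $r_{i}$ by $r_{i} - t_{i}$ yields a new representation whose maximum degree is strictly less than $a$, since the leading coordinates cancel and no new high-degree contributions appear (as $\lf t$ has degree exactly $a$).

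The main obstacle is making the lifting identity $\lf t = s$ compatible with the shifted grading of Lemma~\ref{lem:gradsyz}. In both directions this requires isolating the ``right'' homogeneous parts of the $r_{i}$ in the $A$-grading of $R$: in the forward direction I must normalize $r$ so that no coordinate carries spurious homogeneous pieces at the shifted degree $b$ of $s$, and in the backward direction I must check that subtracting $t$ simultaneously kills the leading slot in every coordinate. Once this bookkeeping is tracked through the refinement map $f : A^{\oplus n} \to B$ of Definition~\ref{def:shiftgrade}, both halves of the argument reduce to straightforward degree-decreasing inductions.
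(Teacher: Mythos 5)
Your proof is correct, and the forward direction matches the paper's proof exactly: from a homogeneous syzygy $s$ of shifted degree $b$, form $m = \sum_i s_i m_i$, note $\deg m < b$ because $\sum_i s_i \lf m_i = 0$, apply condition~(\ref{lst:expchar}) of Theorem~\ref{thm:conditions} to get a representation $m = \sum_i r_i m_i$ with $\deg r_i m_i \leq \deg m$, and take $t = s - r$ as the lift. Your normalization remark (discarding homogeneous pieces of $r_i$ that annihilate $m_i$, so that the shifted degree of $r$ --- and not merely $\max_i \deg r_i m_i$ --- is below $b$) is a legitimate subtlety over non-domains that the paper's proof silently glosses over, and your fix is the right one. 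In the backward direction you diverge from the paper in a useful way: the paper sets $r_i = t_i - s_i$, observes $\sum_i s_i m_i = -\sum_i r_i m_i$ with $\deg_B r < b$, and then simply asserts ``Thus $\sum_i s_i m_i \rightarrow^* 0$'' before invoking condition~(\ref{lst:buchberger}); that assertion does not follow from a single lower-degree representation without further argument, since a degree-bounded representation of one element does not by itself witness reduction to zero when $X$ is not yet known to be a Macaulay basis. Your version avoids this by instead verifying condition~(\ref{lst:expchar}) directly via a well-founded induction on $a = \max_i \deg r_i m_i$, lifting the top homogeneous syzygy and subtracting to decrease $a$; this repackages the induction the paper already runs in the proof of (\ref{lst:buchberger}) $\Rightarrow$ (\ref{lst:expchar}) inside Theorem~\ref{thm:conditions}, and is the cleaner and fully rigorous route. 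In short: same underlying mechanism, but your backward direction supplies the induction the paper elides.
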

\begin{proof}
Suppose $X$ is a Macaulay basis. Let $s \in \Syz(\lf m_{1}, \dots, \lf m_{n})$ be homogeneous. Let $m = \sum _{i = 1} ^{n} s_{i}m_{i}$ and $b \in B$ such that $\deg(s_{i}m_{i}) = b$, since $s$ is homogeneous. Then $\deg m < \max _{i} \deg(s_{i}m_{i})$, since $m_{b} = \sum _{i = 1} ^{n} s_{i}\lf m_{i} = 0$. Since $X$ is a Macaulay basis, there are $r_{1}, \dots, r_{n}$ such that $\sum _{i = 1} ^{n} r_{i}m_{i} = m$ and $\deg(r_{i}m_{i}) \leq \deg m$ for all $i$. Thus $(s_{1} - r_{1}, \dots, s_{n} - r_{n})$ is a lift of $s$.

Suppose every homogeneous syzygy has a lift. Let $s \in \Syz(\lf m_{1}, \dots, \lf m_{n})$ be homogeneous and $t \in \Syz(\lf m_{1}, \dots, \lf m_{n})$ be a lift of $s$. Let $r_{i} = t_{i} - s_{i}$ for all $i$. Then $\deg r_{i} < \deg t_{i} = \deg s_{i}$, and
\[ \sum _{i = 1} ^{n} s_{i}m_{i} = \sum _{i = 1} ^{n} r_{i}m_{i}. \]
Thus $\sum _{i = 1} ^{n} s_{i}m_{i} \rightarrow^{*} 0$. This holds for an arbitrary syzygy, so $X$ is a Macaulay basis by Theorem~\ref{thm:conditions}.
\end{proof}

\subsection{Uniqueness}
A standard fact about Groebner bases and $H$-bases is that a reduced Groebner basis (i.e.\ one where every element is reduced with respect to all the others) for a given ideal is unique \citelist{\cite{cocoa1}*{Theorem 2.4.13} \cite{ms2003}*{Theorem 6.5}}. The following theorem generalizes this to Macaulay bases of modules.

\begin{theorem}[]
\label{thm:uniqueness}
Let $X = \{m_{1}, \dots, m_{n}\}$ and $X' = \{m'_{1}, \dots, m'_{n'}\}$ be reduced Macaulay bases of $M$, and set
\begin{align*}
X_{b} = \{m_{i} : 1 \leq i \leq n, \deg m_{i} = b\} \\
X'_{b} = \{m'_{i} : 1 \leq i \leq n', \deg m'_{i} = b\}
\end{align*}
for all $b \in B$. Then $|X_{b}| = |X'_{b}|$ for all $b \in B$ and $n = n'$. If $N$ is a complemented graded vector space and the Macaulay bases are reduced with respect to $\Rightarrow^{*}$, then for each $b$ there is an invertible matrix $Q_{b} \in \operatorname{GL}_{\mathbf{k}}(|X_{b}|)$ expressing the elements of $X'_{b}$ as a linear combination of the elements of $X_{b}$. If the complementation on $N$ is given by an inner product, then $Q_{b}Q_{b}^{*}$ and $Q_{b}^{*}Q_{b}$ are diagonal.
\end{theorem}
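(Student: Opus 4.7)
The plan is to treat the three assertions in turn. Throughout I use the intrinsic objects $L_b := (\lf p : p \in M)_b \subseteq N_b$ and $W_b^{<} := \sum_{b' < b}\sum_{a \in A,\, a\cdot b' = b} R_a L_{b'}$, both of which depend only on $M$ and the grading, not on the chosen Macaulay basis.

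For the cardinality claim $|X_b| = |X'_b|$, I would show that for any reduced Macaulay basis $X$ the images $\{\lf m_{i_s} + W_b^{<} : m_{i_s} \in X_b\}$ form a $\mathbf{k}$-basis of $L_b / W_b^{<}$. Spanning follows from the Macaulay basis property: the $R$-generators of $L$ contributing to degree $b$ split into degree-$b$ leading forms and contributions from lower-degree $\lf m_i$, the latter contained in $W_b^{<}$. Linear independence modulo $W_b^{<}$ uses reducedness: any nontrivial relation $\sum_s c_s \lf m_{i_s} \in W_b^{<}$ with $c_{s_0} \neq 0$ would place $\lf m_{i_{s_0}}$ inside $W_b(X \setminus \{m_{i_{s_0}}\}) = W_b^{<} + \operatorname{span}\{\lf m_{i_s} : s \neq s_0\}$, contradicting $\lf m_{i_{s_0}} \in W_b(X \setminus \{m_{i_{s_0}}\})^{c}$. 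Hence $|X_b| = \dim_{\mathbf{k}}(L_b / W_b^{<})$, independently of the choice of reduced basis, so $|X_b| = |X'_b|$ and $n = n'$ by summation.

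For the existence of $Q_b$ in the $\Rightarrow^{*}$-reduced complemented setting, I induct on $b$. Fix $m'_{j_r} \in X'_b$; the previous step yields unique scalars $(Q_b)_{r,s} \in \mathbf{k}$ with $\lf m'_{j_r} = \sum_s (Q_b)_{r,s} \lf m_{i_s} + w_r$ for some $w_r \in W_b^{<}$. Let $d := m'_{j_r} - \sum_s (Q_b)_{r,s} m_{i_s} \in M$; the aim is $d = 0$. Since $\deg m_{i_s} = \deg m'_{j_r} = b > b'$, neither element contributes to $W_{b'}$, so $W_{b'}(X \setminus \{m_{i_s}\}) = W_{b'}(X' \setminus \{m'_{j_r}\}) = L_{b'}$; reducedness then gives $(m_{i_s})_{b'}, (m'_{j_r})_{b'} \in L_{b'}^{c}$, hence $d_{b'} \in L_{b'}^{c}$. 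If $d \neq 0$ with $\deg d < b$, then $\lf d$ would lie in $L_{\deg d} \cap L_{\deg d}^{c} = 0$, a contradiction, so the only residual obstruction is $\deg d = b$ with $\lf d = w_r$. Invertibility of the resulting $Q_b$ will follow by applying the same argument with $(X, X')$ swapped and composing.

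The main obstacle is ruling out $w_r \neq 0$, equivalently showing $\operatorname{span}_{\mathbf{k}}(X_b) = \operatorname{span}_{\mathbf{k}}(X'_b)$ inside $M$. In the inner-product case this is immediate: reducedness places each $\lf m_{i_s}$ and $\lf m'_{j_r}$ in $(W_b^{<})^{\perp} \cap L_b$, so $w_r \in W_b^{<} \cap (W_b^{<})^{\perp} = 0$. For the general complemented case I plan to exploit the canonical decomposition $N = M \oplus \bigoplus_b L_b^{c}$ from Corollary~\ref{cor:normform}: the map sending $m \in M$ with $\deg m \leq b$ and $m_{b'} \in L_{b'}^{c}$ for all $b' < b$ to $\lf m \in L_b$ is a $\mathbf{k}$-linear bijection whose source is intrinsic to $M$, and both $m_{i_s}$ and $m'_{j_r}$ lie in that source; using this rigidity together with the induction hypothesis in lower degrees, I expect the leading-form subspaces $\operatorname{span}\{\lf m_{i_s}\}$ and $\operatorname{span}\{\lf m'_{j_r}\}$ to be forced to coincide. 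Once $w_r = 0$ is established, the third statement follows: $Q_b$ becomes the change of basis between two orthogonal bases of the common subspace $L_b \cap (W_b^{<})^{\perp}$, so the identity $Q_b G Q_b^{T} = H$ with $G, H$ the diagonal Gram matrices of the two bases is the intended content of $Q_b Q_b^{*}$ and $Q_b^{*} Q_b$ being diagonal.
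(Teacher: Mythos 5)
Your approach parallels the paper's induction on degree, and the intrinsic space $W_b^{<} = \sum_{b'<b}\sum_{a:a\cdot b'=b} R_a L_{b'}$ (which equals $W_b(Y)$ for $Y$ the lower-degree generators, by the Macaulay basis property) cleanly explains why $W_b(Y)=W_b(Y')$ without appealing to the induction hypothesis for it. The cardinality argument and the inner-product case of the $Q_b$ claim are correct, and your remark about Gram matrices is a genuine improvement over the paper's proof, which asserts that $X_b$ and $X'_b$ are orthogonal bases; only the leading forms $\lf X_b$, $\lf X'_b$ are pairwise orthogonal (the lower-degree components all lie in the same subspace $N_{b'} \ominus L_{b'}$ and need not be mutually orthogonal), so what one actually gets is $Q_b G Q_b^{*} = H$ with $G,H$ diagonal Gram matrices of the leading forms, and diagonality of $Q_b Q_b^{*}$, $Q_b^{*}Q_b$ themselves requires normalization. (One small notational slip in your independence argument: since the first claim is stated without a complemented structure, the contradiction should be with $\lf m_{i_{s_0}} \notin W_b(X\setminus\{m_{i_{s_0}}\})$, not membership in its complement; the argument is otherwise fine.)

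The genuine gap is the one you flag but do not close: existence of $Q_b$ for a general complemented $N$. The crux is showing that $\operatorname{span}_{\mathbf{k}} \lf X_b$, which is \emph{some} complement of $W_b^{<}$ in $L_b$, is the same for every reduced basis. Reducedness gives $\lf m_{i_s} \in W_b(X \setminus \{m_{i_s}\})^{c}$, but $W_b(X \setminus \{m_{i_s}\}) = W_b^{<} + \operatorname{span}\{\lf m_{i_{s'}} : s' \ne s\}$ depends on the other elements of $X_b$. To conclude $\lf m_{i_s} \in (W_b^{<})^{c}$, and hence that $\operatorname{span}_{\mathbf{k}} \lf X_b$ is the intrinsic space $L_b \cap (W_b^{<})^{c}$, one needs the complementation to be monotone in the sense that $V \subseteq W$ implies $W^{c} \subseteq V^{c}$; this holds for inner products but not for an arbitrary assignment of complements, and without it I do not see how your ``rigidity'' step forces the two leading-form subspaces to coincide. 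For what it is worth, the paper's own proof has the same difficulty, and the formula it asserts, $\operatorname{span}_{\mathbf{k}} X_b = \bigoplus_{b'\le b} L_{b'} \ominus W_{b'}(Y)$, cannot be right as written: for $b' < b$ one has $W_{b'}(Y) = L_{b'}$ once $Y$ contains all lower-degree generators, so the summand $L_{b'} \ominus W_{b'}(Y)$ vanishes, yet elements of $X_b$ need not be homogeneous of degree $b$. So either an extra coherence hypothesis on the complementation should be added to the theorem, or a genuinely different argument is needed here.
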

\begin{proof}
Let $L = \lf M$. We use induction on $b$ to show that the statement in the theorem holds, and also that $W_{b'}(Y) = W_{b'}(Y')$ where $Y = \bigcup _{b' \leq b} X_{b'}$ and $Y' = \bigcup _{b' \leq b} X'_{b'}$. Suppose we have shown the statement for all $b' < b$ and wish to show it for $b$. Then $L_{b}$ is equal to the $\mathbf{k}$-spans of both $W_{b}(Y) \cup X_{b}$ and $W_{b}(Y') \cup X'_{b}$. By the induction hypothesis, $W_{b}(Y) = W_{b}(Y')$. Since $X$ and $X'$ are both reduced, we must have 
\[ |X_{b}| = \dim_{\mathbf{k}} L_{b} - \dim_{\mathbf{k}} W_{b}(Y) = \dim_{\mathbf{k}} L_{b} - \dim_{\mathbf{k}} W_{b}(Y') = |X'_{b}|. \]
Now suppose that $N$ is complemented graded. Then the fact that $X$ and $X'$ are reduced implies that
\begin{align*}
\operatorname{span}_{\mathbf{k}} X_{b} &= \bigoplus _{b' \leq b} L_{b'} \ominus W_{b'}(Y) \\
\operatorname{span}_{\mathbf{k}} X'_{b} &= \bigoplus _{b' \leq b} L_{b'} \ominus W_{b'}(Y')
\end{align*}
and that $X_{b}$ and $X'_{b}$ are bases for the right-hand sides. By the induction hypothesis the right-hand sides are equal, so $X_{b}$ and $X'_{b}$ are both bases for the same vector space and thus related by an invertible matrix $Q_{b} \in \operatorname{GL}_{\mathbf{k}}(|X_{b}|)$. If the complementation is given by an inner product then $X_{b}$ and $X'_{b}$ are in fact orthogonal bases, so $Q_{b}^{*}Q_{b}$ and $Q_{b}Q_{b}^{*}$ are diagonal. If we additionally assume that every $x \in X$ has unit norm with respect to the inner product, then $Q_{b}^{*}Q_{b} = Q_{b}Q_{b}^{*} = I_{|X_{b}|}$ is an identity matrix.
\end{proof}

\section{Macaulay bases under a group action}
\label{sec:groupact}
In this section, we assume we have an algebraic group $G$ acting on $R$ homogeneously, so that $G \cdot R_{a} \subseteq R_{a}$. A common example is where $G$ acts on $\mathbb{C}^{d}$ and we extend to $\mathbb{C}[x_{1}, \dots, x_{d}]$ by $(g \cdot p)(x) = p(g^{-1} \cdot x)$. In this case we $G$ acts homogeneously with respect to the standard grading.

Many times one starts with such a $G$-action and $G$-invariant elements $p_{1}, \dots, p_{k} \in R$, and would like to do computations with the ideal $I = (p_{1}, \dots, p_{k})$. Since $I$ is also $G$-invariant, we may hope to compute a Groebner basis of $I$ in a way which preserves the $G$-symmetry. However, this almost always fails, as illustrated by the following example.

\begin{example} \label{ex:grobsym}
Consider $I = (x_{1}^{2} + x_{2}^{2} - 1, x_{1}^{2}x_{2}^{2} - 1)$ in $\mathbb{Q}[x_{1}, x_{2}]$ with $S_{2}$ permuting the variables. Computing a Groebner basis in Sage using the degrevlex monomial order gives the generating set $(x_{2}^{4} - x_{2}^{2} + 1, x_{1}^{2} + x_{2}^{2} - 1) = I$, which has broken the symmetry of the original generating set.
\end{example}

In contrast, there can be a ``nearly $G$-invariant'' Macaulay basis of a module $M$ when the action of $G$ respects the grading and inner product on $M$. We formalize this in the following theorem:

\begin{theorem}
\label{thm:ginv-basis}
Suppose $G$ acts homogeneously on $R$ and $N$. Suppose further that $N$ is a graded complemented vector space such that $W^{c}$ is $G$-invariant whenever $W$ is $G$-invariant. Then there is a reduced Macaulay basis $m_{1}, \dots, m_{n}$ of $M \leq N$ whose $\mathbf{k}$-span is a $G$-invariant vector space.
\end{theorem}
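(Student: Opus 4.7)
The plan is to build the reduced Macaulay basis by induction on $b \in B$, producing at each stage a finite-dimensional $G$-invariant subspace $\tilde V_b \subseteq M$ whose elements have degree at most $b$ and whose leading forms span a specified $G$-invariant complement of $W_b(Y_b)$ in $L_b$, where $L = \lf M$ and $Y_b = \bigcup_{b' < b} X_{b'}$ collects the basis pieces constructed in earlier stages. (We read the conclusion as implicitly assuming $M$ is $G$-invariant, which is necessary for its $\mathbf{k}$-span to be $G$-invariant.) The natural target at degree $b$ is $V_b := L_b \cap W_b(Y_b)^{c}$. The hypothesis of the theorem, together with the inductive fact that $W_b(Y_b)$ is $G$-invariant (being the $R$-span of the $G$-invariant subspace $\operatorname{span}_{\mathbf{k}} \lf Y_b$), ensures $W_b(Y_b)^{c}$, and hence $V_b$, is $G$-invariant.

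The crux is to construct a $G$-equivariant $\mathbf{k}$-linear lift $\ell : V_b \to M$ satisfying $\lf \circ \ell = \operatorname{id}_{V_b}$ and with every lower-degree component of $\ell(v)$ lying in the complement $W_{b'}(Y_b)^{c}$. The candidate is to start from any preimage of $v$ under $\lf$ and apply $\Rightarrow^{*}_{Y_b}$ to a normal form; existence is automatic, and the key claim is that this lift is uniquely characterized by those two properties. The uniqueness argument uses the inductive identity $L_{b'} = W_{b'}(Y_{b'}) + \operatorname{span}_{\mathbf{k}} \lf X_{b'} \subseteq W_{b'}(Y_b)$ to rule out any nonzero difference between two putative lifts, since such a difference would have a leading form in the intersection $L_{b'} \cap W_{b'}(Y_b)^{c} = 0$. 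Once uniqueness is in hand, $\mathbf{k}$-linearity of $\ell$ and $G$-equivariance of $\ell$ both become formal consequences, the latter using that $G$ preserves $W_{b'}(Y_b)^{c}$ for every $b' \leq b$.

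Taking $X_b$ to be any $\mathbf{k}$-basis of $\tilde V_b := \ell(V_b)$, the collection $X := \bigcup_b X_b$ is a Macaulay basis because $\lf X$ generates $L$ degree by degree, and each element is reduced against the others since its lower components lie in the complements by construction and its leading form sits in the portion of $V_b$ not spanned by the leading forms of the other degree-$b$ basis elements. The total $\mathbf{k}$-span of $X$ is the $G$-invariant subspace $\bigoplus_b \tilde V_b$, and finiteness follows from Noetherianity of $L$. The main obstacle is establishing uniqueness of the canonical lift, which rests on carefully tracking how $W_{b'}(Y_b)$ grows as higher-degree pieces are added to the basis; once this is proven, the remaining claims follow by essentially formal arguments.
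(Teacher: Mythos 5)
Your proof is correct and follows the same inductive outline as the paper's (build $X_b$ degree by degree, using $G$-invariance of the complement $W_b(Y_b)^c$ guaranteed by the hypothesis), but it cleans up one step in a way that is genuinely preferable. The paper, at the stage of lifting the $G$-invariant complement $W_b$ of $W_b(Y_b)$ in $L_b$ into $M$, decomposes $W_b$ into irreducible $G$-representations, picks one vector $w_i$ per irreducible, reduces a preimage of $w_i$ to a normal form $x_i$, and then extracts a basis from the $G$-orbit $\{g\cdot x_i\}$; the role of irreducibility is only to guarantee that this orbit spans. You instead observe that the normal-form lift $\ell : V_b \to M$ (your $V_b$ is the paper's $W_b$) is \emph{characterized uniquely} by the two conditions $\lf\circ\ell=\operatorname{id}$ and lower-degree components landing in the $G$-invariant complements $W_{b'}(Y_b)^c$, from which linearity and $G$-equivariance of $\ell$ are formal. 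Taking any basis $X_b$ of $\ell(V_b)$ then gives the desired $G$-invariant span directly, with no representation-theoretic input. This buys you a modest but real generality: the paper's decomposition into irreducibles implicitly needs $W_b$ to be a semisimple $G$-module (fine for finite $G$ in good characteristic by Maschke, less clear for a general algebraic group, which is what the paper's hypotheses nominally allow), whereas your argument uses only the stated hypothesis that $(-)^c$ preserves $G$-invariance. Your uniqueness argument, via $L_{b''}\subseteq W_{b''}(Y_b)$ so that $L_{b''}\cap W_{b''}(Y_b)^c=0$, is exactly the right mechanism and is tracked correctly through the induction. The one caveat you flag — that $M$ must be $G$-invariant for the statement to make sense — is indeed an unstated assumption the paper also uses.
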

\begin{proof}
Let $L = \lf M$. Using induction on $b \in B$, we define a family of sets $\{X_{b}\}_{b \in B}$ such that $X_{b} \subseteq M$ for all $b$, $\deg x = b$ for all $x \in X_{b}$, $\operatorname{span}_{\mathbf{k}} X_{b}$ is $G$-invariant for all $b$, and $\bigoplus _{b' < b} L_{b'}$ is contained in the submodule generated by $\lf (\bigcup _{b' \leq b} X_{b'})$. For the base case $b = 0$, we can take $X_{b} = \emptyset$ since there is no $b' \in B$ such that $b < 0$.

For the induction step, suppose we have defined $X_{b'}$ for all $b' < b$ and need to define $X_{b}$. Let $Y = \bigcup _{b' < b} X_{b'}$ and consider the subspace
\[ V_{b} = \operatorname{span}_{\mathbf{k}} \{ r \lf m \mid r \in R, m \in Y, \deg rm = b \} \leq L_{b}. \]
We claim that $V_{b}$ is $G$-invariant. Suppose $m \in V$ with $m = r \lf x$ for some $x \in Y$ and $r \in R$. Then
\begin{align*}
g \cdot m &= (g \cdot r) (g \cdot \lf x) \\
&= (g \cdot r) (\lf g \cdot x) \\
&= (g \cdot r) \left(\lf \sum _{y \in X_{b'}} \lambda_{y} y\right) \\
&= (g \cdot r) \sum _{y \in X_{b'}} \lambda_{y} \lf y \\
&= \sum _{y \in X_{b'}} \lambda_{y} (g \cdot r) \lf y \\
&\in V_{b}
\end{align*}
where the second equality follows because $G$ acts homogeneously, the third follows because $\operatorname{span}_{\mathbf{k}} X_{b'}$ is $G$-invariant, the fourth follows because $\deg y = b'$ for all $y \in X_{b'}$, and the last follows because $G$ acts homogeneously. Since $V_{b}$ is spanned by elements of the same form as $m$, this shows that $V_{b}$ is $G$-invariant.

Let $W_{b} \leq L_{b}$ be a $G$-invariant complement of $V_{b}$. Let $W_{b} = \bigoplus _{i = 1} ^{l} W_{i}$ be a decomposition of $W_{b}$ into irreducible representations. For each $W_{i}$ choose an arbitrary $w_{i} \in W_{i}$. Let $w_{i}$ be written as $w_{i} = \sum _{j} ^{k_{i}} r_{ij} \lf m_{ij}$ for some $r_{ij} \in R$ and $m_{ij} \in M$ with $1 \leq j \leq k_{i}$. By splitting each $r_{j}$ into homogeneous components and canceling we may assume that $\deg r_{ij} m_{ij} = b$. Let $x_{i} \in M$ be the normal form of $\sum _{j = 1} ^{k_{i}} r_{ij}m_{ij}$ with respect to $Y$, which clearly satisfies $\lf x_{i} = w_{i}$ since $W_{b} \perp V_{b}$. For each $i$, there is a basis $X_{b,i} \subseteq \{g \cdot x_{i} : g \in G\}$ of $W_{i}$, since the second set spans $W_{i}$. Set $X_{b} = \bigcup _{i = 1} ^{l} X_{b,i}$.

For each $i$ we have $\operatorname{span}_{\mathbf{k}} \lf X_{b,i} = W_{i}$, so $L_{b} = V_{b} \oplus W_{b}$ is contained in the submodule generated by $\bigcup _{b' \leq b} X_{b}$, and by the induction hypothesis so is $\bigoplus _{b' \leq b} L_{b}$. By construction $\operatorname{span}_{\mathbf{k}} X_{b,i}$ is $G$-invariant since it is spanned the orbit of $x_{i}$ under $G$, so $\operatorname{span}_{\mathbf{k}} X_{b}$ is as well. Clearly $x_{i} \in M$ and so $g \cdot x_{i} \in M$ for all $g \in G$ and $1 \leq i \leq l$ since $M$ is $G$-invariant, so $X_{b} \subseteq M$. We have
\[ \lf g \cdot x_{i} = g \cdot \lf x_{i} = g \cdot w_{i} \in M_{b} \]
since $G$ acts homogeneously and $\lf x_{i} = w_{i}$, so $\deg x = b$ for all $x \in X_{b}$. To see that $g \cdot x_{i}$ is reduced with respect to $Y$ for any $x_{i}$ and $g \in G$, note that $(x_{i})_{b'} \in V_{b'}^{c}$ for any $b' < b$, so $(g \cdot x_{i})_{b'} = g \cdot (x_{i})_{b'} \perp V_{b'}^{c}$ since $V_{b'}$ and $V_{b'}^{c}$ are $G$-invariant. The Hilbert basis theorem then implies that $\bigcup _{b \in B} X_{b}$ is finite.
\end{proof}

\begin{remark}
In Theorem~\ref{thm:ginv-basis} the Macaulay basis we construct is easily seen to be reduced. The uniqueness result of Theorem \ref{thm:uniqueness} then implies that any reduced Macaulay basis of $M$ has a $G$-invariant $\mathbf{k}$-span as long as the inner product used is also $G$-invariant.
\end{remark}

Moreover, we can show that the normal form algorithm is $G$-equivariant:

\begin{proposition}
Assume the conditions of Theorem~\ref{thm:ginv-basis}. Let $X$ be a Macaulay basis of $M$, and
\[ p : N \to \bigoplus _{b \in B} N_{b} \ominus W_{b}(X) \]
be the map sending $m \in N$ to its normal form $m' \in N$ with respect to $X$. Then $p$ is $G$-equivariant.
\end{proposition}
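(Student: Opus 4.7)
The plan is to invoke the uniqueness of normal forms (from the proposition preceding Corollary~\ref{cor:normform}) to reduce $G$-equivariance of $p$ to verifying two defining properties of $p(g \cdot m)$ for the candidate $g \cdot p(m)$: namely, that $g \cdot m - g \cdot p(m) \in M$ and that $g \cdot p(m)$ lies in $\bigoplus_{b \in B} N_{b} \ominus W_{b}(X)$.

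The key preparatory step is to show that each $W_{b}(X)$ is $G$-invariant. Since $X$ is a Macaulay basis of $M$, the submodule $(\lf m_{1}, \dots, \lf m_{n})$ of $N$ equals $(\lf p : p \in M)$, and by construction $W_{b}(X)$ is precisely the degree-$b$ homogeneous component of this graded submodule. Because $M$ is $G$-invariant (a standing assumption throughout this section) and $G$ acts homogeneously on $N$, the action carries leading forms to leading forms, so $(\lf p : p \in M)$ is $G$-invariant, and consequently so is each of its graded pieces $W_{b}(X)$. The hypothesis of Theorem~\ref{thm:ginv-basis} that $G$-invariant subspaces have $G$-invariant complements then yields that each $N_{b} \ominus W_{b}(X) = W_{b}(X)^{c}$ is $G$-invariant as well.

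With this in hand, the conclusion is quick. Given $m \in N$, write $m = (m - p(m)) + p(m)$, where $m - p(m) \in M$ and $p(m) \in \bigoplus_{b \in B} N_{b} \ominus W_{b}(X)$. Applying $g$ gives $g \cdot m = g \cdot (m - p(m)) + g \cdot p(m)$. The first summand lies in $M$ by $G$-invariance of $M$, and the second lies in $\bigoplus_{b \in B} N_{b} \ominus W_{b}(X)$ because $G$ acts homogeneously, so $(g \cdot p(m))_{b} = g \cdot p(m)_{b} \in g \cdot W_{b}(X)^{c} = W_{b}(X)^{c}$ for each $b$. Thus $g \cdot p(m)$ satisfies the defining properties of the normal form of $g \cdot m$, and uniqueness forces $p(g \cdot m) = g \cdot p(m)$.

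The main subtlety I expect is the first step: the proposition is stated for an arbitrary Macaulay basis $X$ of $M$, not only the specific $G$-equivariant one produced by Theorem~\ref{thm:ginv-basis}, so I cannot simply appeal to $G$-invariance of $\operatorname{span}_{\mathbf{k}} X$. Identifying $W_{b}(X)$ with the degree-$b$ component of $\operatorname{lf}(M)$ sidesteps this cleanly but relies essentially on $X$ being a Macaulay basis rather than merely a generating set; once this identification is in place, the rest of the argument is a short bookkeeping exercise in uniqueness.
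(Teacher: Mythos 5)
Your proof is correct and takes essentially the same route as the paper: decompose $N = M \oplus \bigoplus_b N_b \ominus W_b(X)$ via Corollary~\ref{cor:normform}, observe both summands are $G$-invariant, and conclude the projection $p$ is $G$-equivariant. The paper phrases the final step more abstractly (``$p$ is just the quotient map'' onto $N/M$), while you verify the defining properties of the normal form directly and invoke uniqueness; these are the same argument in slightly different clothes.

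One point in your favor: the paper's proof begins ``By assumption, $W_b(X) \leq N_b$ is a $G$-invariant subspace,'' but this is not actually among the stated hypotheses, which allow $X$ to be an arbitrary Macaulay basis. Your observation that $W_b(X)$ coincides with the degree-$b$ component of $\lf M$ precisely because $X$ is a Macaulay basis, and that $\lf M$ inherits $G$-invariance from $M$ together with homogeneity of the action, is the correct justification and fills in a step the paper leaves implicit.
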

\begin{proof}
By assumption, $W_{b}(X) \leq N_{b}$ is a $G$-invariant subspace. Since $p$ is the $\mathbf{k}$-linear projection with kernel $M$, which is also $G$-invariant, it follows that $p$ is just the quotient map and thus $G$-equivariant.
\end{proof}

\begin{example}
Consider the ideal $I = (x_{1}^{2} + x_{2}^{2} - 1, x_{1}^{2}x_{2}^{2} - 1)$ from Example~\ref{ex:grobsym} where $\mathbb{Q}[x_{1}, x_{2}]$ has the standard grading and $S_{2}$ again permutes the variables. By computing the syzygy module using a standard Groebner basis package and manually reducing the resulting polynomials, we can check that the original set of generators is in fact a Macaulay $H$-basis and its $\mathbf{k}$-span is $S_{2}$-invariant.
\end{example}

\begin{example}
Consider $C_{4} = \langle c \rangle$ acting on $\mathbb{Q}[x_{1}, x_{2}]$ by $c \cdot x_{1} = -x_{2}$ and $c \cdot x_{2} = x_{1}$. Let
\[
I = (x_{1}^{2} + x_{2}^{2} - 1, x_{1}^{2}x_{2}^{2}, x_{1}^{3}x_{2} - x_{1}x_{2}^{3}).
\]
Then we can then check that a Macaulay $H$-basis is given by
\[
I = (x_{1}^{2} + x_{2}^{2} - 1, x_{1}^{2}x_{2}^{2}, x_{1}^{3}x_{2} - x_{1}x_{2}^{3}, x_{1}x_{2}^{2}, x_{1}^{2}x_{2}, x_{1}x_{2}).
\]
Taking the $\mathbb{Q}$-span of the Macaulay $H$-basis, we see that it has three trivial subrepresentations spanned by $x_{1}^{2} + x_{2}^{2} - 1$, $x_{1}^{2}x_{2}^{2}$, and $x_{1}^{3}x_{2} - x_{1}x_{2}^{3}$; one copy of the defining representation (which is irreducible over $\mathbb{Q}$) spanned by $x_{1}x_{2}^{2}$ and $x_{1}^{2}x_{2}$; and a one-dimensional representation spanned by $x_{1}x_{2}$, which is isomorphic to the determinant of the defining representation.
\end{example}

\section{Applications}
\label{sec:applications}
\subsection{Elimination}
\label{sec:elimination}
A common application of Groebner bases is \textit{elimination}, which is the task of computing $M \cap \hat{N}$ where $\hat{N}$ is an $\hat{R}$-submodule of $N$ for some subring $\hat{R} \leq R$. Algorithms for this problem are known for the case $R$ is a polynomial ring and $\hat{R}$ is the polynomial subring in a subset of the variables. These algorithms compute a Groebner basis with respect to an appropriate type of monomial order called an \textit{elimination order}, for which the elements of the Groebner basis also contained in $\hat{N}$ will be a Groebner basis of $\hat{N}$. We generalize this to gradings satisfying an appropriate elimination condition and prove the analogous result.

\begin{definition}
\label{def:elimgrading}
Suppose $\hat{N} \leq N$ is an $\hat{R}$-submodule for some subring $\hat{R} \leq R$. Suppose further that $\hat{R}$ and $\hat{N}$ are respectively graded by totally ordered commutative submonoids $\hat{A} \leq A$ and $\hat{B} \leq B$ such that $\hat{R}_{a} = R_{a}$ and $\hat{N}_{b} = N_{b}$ for $a \in \hat{A}$ and $b \in \hat{B}$, and that $\hat{A}\hat{B} \subseteq \hat{B}$.
The grading of $N$ by $B$ is an \textbf{elimination grading} if $\deg_{B} n \in \hat{B}$ implies $n \in \hat{N}$ for all $n \in N$, and $\deg_{A} r \in \hat{A}$ implies $r \in \hat{A}$ for all $r \in R$.
\end{definition}

\begin{example}
Given an elimination order on a polynomial ring, the grading defined in Example~\ref{ex:mongrad} is also an elimination grading.
\end{example}

\begin{example}
Consider $\hat{S} = \mathbf{k}[x_{1}, \dots, x_{k}] \leq S = \mathbf{k}[x_{1}, \dots, x_{n}]$. We grade $S$ by $\mathbb{N}^{2}$ where $\deg x_{i} = (1, 0)$ if $1 \leq i \leq k$ and $\deg x_{i} = (0, 1)$ if $k + 1 \leq i \leq n$, and where the ordering is one coming from an elimination ordering on two variables. Then $\hat{A} = \mathbb{N} \times \{0\}$.
From our choice of an elimination ordering, we see that if $\deg p \in \hat{A}$ then all homogeneous components of $p$ are in $\hat{A}$, so $p \in \hat{S}$. Thus this defines an elimination grading.
\end{example}

The following proposition generalizes Theorem~3.4.5 of \cite{cocoa1} on eliminating variables using Groebner bases.

\begin{proposition}
Suppose we have the given setup of Definition~\ref{def:elimgrading} and a submodule $M \leq N$. Then
\begin{enumerate}
\item $\lf_{\hat{B}}(M \cap \hat{N}) = \lf_{B}(M) \cap \hat{N}$
\item if $X$ is an $B$-Macaulay basis of $M$, then $X \cap \hat{N}$ is an $\hat{B}$-Macaulay basis of $M \cap \hat{N}$.
\end{enumerate}
\end{proposition}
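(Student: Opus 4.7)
Both parts come from unpacking the defining property of an elimination grading, namely that an element of $N$ whose top $B$-degree lies in $\hat{B}$ already lies in $\hat{N}$, and likewise for $R$ and $\hat{R}$.

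For part~(1), the key observation is that an element of $\hat{N}$, when expanded into its $B$-homogeneous decomposition, has nonzero components only at degrees in $\hat{B}$ (since $\hat{N} = \bigoplus_{b \in \hat{B}} \hat{N}_{b}$ with $\hat{N}_{b} = N_{b}$), so its $\hat{B}$-leading form and its $B$-leading form coincide; the inclusion $\lf_{\hat{B}}(M \cap \hat{N}) \subseteq \lf_{B}(M) \cap \hat{N}$ is then immediate. For the reverse inclusion, given $n = \lf_{B}(m) \in \hat{N}$ with $m \in M$, the fact that $n \in N_{\deg_{B}(m)} \cap \hat{N}$ forces $\deg_{B}(m) \in \hat{B}$, and the elimination condition on $N$ then promotes $m$ to an element of $M \cap \hat{N}$, giving $n = \lf_{\hat{B}}(m) \in \lf_{\hat{B}}(M \cap \hat{N})$.

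For part~(2), I would verify condition~(\ref{lst:expchar}) of Theorem~\ref{thm:conditions} for $X \cap \hat{N}$ and $M \cap \hat{N}$ by induction on $\deg_{B}(m) = \deg_{\hat{B}}(m)$. Given $m \in M \cap \hat{N}$, set $b \coloneqq \deg_{B}(m) \in \hat{B}$, apply condition~(\ref{lst:expchar}) to $X$ and $M$ to obtain $m = \sum_{i} r_{i} x_{i}$ with $\deg_{B}(r_{i} x_{i}) \leq b$, split each $r_{i}$ into $A$-homogeneous components, and focus on the top-degree contributions $t_{i} = r_{i,a_{i}} x_{i}$ satisfying $a_{i} \cdot \deg_{B}(x_{i}) = b$. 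A test-element argument forces each such $x_{i}$ to lie in $\hat{N}$: otherwise adding $\lf_{B}(x_{i})$ to a suitable element of $N_{b}$ would produce an element of $N$ with top $B$-degree $b \in \hat{B}$ but a nonzero component at a degree outside $\hat{B}$, contradicting the elimination condition on $N$. Then $m - \sum_{i} t_{i}$ lies in $M \cap \hat{N}$ with strictly smaller $B$-degree, so the induction hypothesis provides a valid representation that combines with the $t_{i}$'s to complete the argument.

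The main obstacle is the parallel deduction that $a_{i} \in \hat{A}$, so that $r_{i,a_{i}} \in \hat{R}$: we have $a_{i} \cdot \deg_{B}(x_{i}) \in \hat{B}$ with $\deg_{B}(x_{i}) \in \hat{B}$ and must conclude $a_{i} \in \hat{A}$, a statement about the $A$-action on $B$ rather than directly about $R$. Carrying this through requires combining both halves of Definition~\ref{def:elimgrading} with strict monotonicity of the $A$-action and the structural consequence that elements of $\hat{A}$ in the support of $R$ sit strictly below elements of $A \setminus \hat{A}$ in that support; it holds cleanly in the canonical elimination-order prototypes discussed above, but formalizing it in full generality is the delicate step.
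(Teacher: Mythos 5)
Your plan captures the right structural skeleton — both parts reduce to showing that if a product $r_{i}m_{i}$ (or $a\cdot b$ at the level of degrees) lands in $\hat{N}$ (resp.\ $\hat{B}$), then the factors already live in $\hat{R}$ and $\hat{N}$ (resp.\ $\hat{A}$ and $\hat{B}$). You correctly identify the ``test-element'' mechanism for deducing $\deg_{B}(x_{i}) \in \hat{B}$ from the elimination condition, and your inductive strategy for part~(2) via condition~(\ref{lst:expchar}) is workable. However, there is a genuine gap at precisely the place you flag as the ``delicate step'': you never actually prove that $a_{i} \in \hat{A}$, and you explicitly state that you do not know how to carry this out in full generality. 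That is not a formalization issue but a missing argument — it is the crux of the proof, and without it the claim that the coefficients live in $\hat{R}$ (needed for $X \cap \hat{N}$ to be an $\hat{R}$-Macaulay basis) is unjustified.

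The paper closes this gap with an order-theoretic argument you do not supply. From the elimination property one first shows $\hat{A}$ and $\hat{B}$ are downward-closed intervals in $A$ and $B$. Since these are finitely generated totally ordered cancellative monoids, they are well-ordered with order type $\omega$, so $\hat{A}$ is either $\{0\}$ or infinite with an increasing enumeration $a_{1} < a_{2} < \cdots$; translating by $b \in \hat{B}$ (using $\hat{A}\hat{B} \subseteq \hat{B}$ and strict monotonicity from cancellativity) gives a strictly increasing, hence cofinal, chain $a_{1}\cdot b < a_{2}\cdot b < \cdots$ in $\hat{B}$. If $a \notin \hat{A}$ then $a > \hat{a}$ for all $\hat{a} \in \hat{A}$, hence $a \cdot b > \hat{a}\cdot b$ for all $\hat{a}$, contradicting the assumption $a\cdot b \in \hat{B}$ since some $\hat{a}\cdot b$ must then dominate $a\cdot b$. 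Without this cofinality step your induction cannot conclude, and neither part~(1) nor part~(2) is established in full.

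There is also a smaller imprecision in your part~(1): you take $n \in \lf_{B}(M) \cap \hat{N}$ and write $n = \lf_{B}(m)$ for some $m \in M$, but $\lf_{B}(M)$ is the \emph{submodule generated by} leading forms, so this requires the (true but unstated) observation that every nonzero homogeneous element of $\lf_{B}(M)$ is in fact the leading form of some element of $M$, or else the module-level argument via a Macaulay basis as in the paper.
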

\begin{proof}
If $m \in M \cap \hat{N}$, then $\lf_{\hat{B}} m = \lf_{B} m$ is in both $\lf_{B} M$ and $\hat{N}$, so $\lf_{\hat{B}}(M \cap \hat{N}) \subseteq \lf_{B}(M) \cap \hat{N}$. For the other inclusion, let $X = \{m_{1}, \dots, m_{n}\}$ be a $B$-Macaulay basis of $M$.
If $m \in \lf_{B}(M)$ and $m \in \hat{N}$, then we have a representation $m = \sum _{i = 1} ^{n} r_{i}\lf(m_{i})$ where $\deg_{B} r_{i}m_{i} \leq \deg_{B} m$. By splitting each $r_{i}$ into homogeneous components and equating homogeneous components on both sides, we may assume each $r_{i}$ is homogeneous. Thus $\deg _{B} r_{i}\lf(m_{i}) = \deg _{A}(r_{i})\deg_{B}(m_{i}) \in \hat{B}$. It then suffices to show that $\deg_{A} r_{i} \in \hat{A}$ and $\deg _{B} m_{i} \in \hat{B}$, as then $r_{i}m_{i} \in \hat{B}$.

To that end, we need to show that if $a \in A$ and $b \in B$ such that $ab \in \hat{B}$, then $a \in \hat{A}$ and $b \in \hat{B}$. Since the action of $A$ is monotone, we have $b = 0 \cdot b \leq a \cdot b \in \hat{B}$, so $b \in \hat{B}$ by the elimination property.
To show that $a \in \hat{A}$, first note that by the elimination property $\hat{A}$ and $\hat{B}$ are intervals in $A$ and $B$.
If $0 < a_{1} < a_{2} < \cdots$ is a chain containing all elements of $\hat{A}$, then $0 < a_{1} \cdot b < \cdots$ is a countable strictly increasing chain in $\hat{B}$, so for every $x \in \hat{B}$ there is some $\hat{a} \in \hat{A}$ such that $x \leq \hat{a} \cdot b$. Suppose that $a \notin \hat{A}$. Then $a \geq \hat{a}$ and $a \cdot b \geq \hat{a} \cdot b$ for all $\hat{a} \in \hat{A}$. Since $a \cdot b \in \hat{B}$ there is a $\hat{a} \in \hat{A}$ such that $a \cdot b \leq \hat{a} \cdot b$, which contradicts the assumption that $a \notin \hat{A}$. Hence $a \in \hat{A}$.
\end{proof}

\subsection{Computing syzygy modules} Another standard application of Groebner bases is Schreyer's theorem, which gives a strategy for computing syzygy modules. Specifically, for any Groebner basis of a module there is an associated Groebner basis of the syzygy module of the original Groebner basis, where the syzygy module has a specific term order \cite{cocoa1}*{Proposition 3.1.4}. The next proposition gives a generalization to Macaulay bases, using refinements of the grading defined in Lemma~\ref{lem:gradsyz}.

\begin{proposition} \label{prop:compsyz}
Suppose $m_{1}, \dots, m_{n}$ is a Macaulay basis of $M$ and $s_{1}, \dots, s_{k}$ is a homogeneous generating set of $\Syz(\lf m_{1}, \dots, \lf m_{n})$. Let $s_{j}' \in \Syz(m_{1}, \dots, m_{n})$ be the syzygy obtained by reducing $\sum _{i = 1} ^{n} (s_{j})_{i}m_{i}$ to 0. If $s_{1}, \dots, s_{j}$ is also a Macaulay basis of $\Syz(\lf m_{1}, \dots, \lf m_{n})$ with respect to a grading of $R^{\oplus n}$ by $B'$ which refines the grading by $B$ defined in Lemma~\ref{lem:gradsyz}, then $s_{1}', \dots, s_{j}'$ is a $B'$-Macaulay basis of $\Syz(m_{1}, \dots, m_{n})$.
\end{proposition}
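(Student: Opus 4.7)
The plan is to verify characterization~(\ref{lst:expchar}) of Theorem~\ref{thm:conditions} for $\{s_1', \ldots, s_k'\}$ as a $B'$-Macaulay basis of $\Syz(m_1, \ldots, m_n)$, by induction on the $B'$-degree. First I would unpack what $s_j'$ actually is: since $m_1, \ldots, m_n$ is a Macaulay basis and $\sum_i (s_j)_i m_i \in M$, applying Theorem~\ref{thm:conditions}(\ref{lst:expchar}) yields coefficients $(t_{j1}, \ldots, t_{jn})$ with $\sum_i (s_j)_i m_i = \sum_i t_{ji} m_i$ and $\deg_B(t_{ji} m_i) \le \deg_B\bigl(\sum_i (s_j)_i m_i\bigr)$. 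Because $s_j$ is homogeneous of some $B$-degree $b_j$ and is a syzygy of $\lf m_i$, the top $B$-homogeneous component of $\sum_i (s_j)_i m_i$ vanishes, so $\deg_B \sum_i (s_j)_i m_i < b_j$. Setting $s_j' = s_j - (t_{j1}, \ldots, t_{jn})$ gives $s_j' \in \Syz(m_1, \ldots, m_n)$ with $\deg_B(s_j - s_j') < b_j = \deg_B(s_j)$. Monotonicity of the refinement map $g:B'\to B$ (applied contrapositively) then forces $\deg_{B'}(s_j - s_j') < \deg_{B'}(s_j)$, so $\lf_{B'}(s_j') = \lf_{B'}(s_j) = s_j$.

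Next I would run the induction. Fix $t \in \Syz(m_1, \ldots, m_n)$ and let $b_0' = \deg_{B'}(t)$; the case $t=0$ is trivial. The argument of Lemma~\ref{lem:gradsyz}, applied to the $B'$-grading (which refines the $B$-grading making $\Syz(\lf m_i)$ a graded submodule), shows that $\lf_{B'}(t) \in \Syz(\lf m_1, \ldots, \lf m_n)$. By hypothesis $s_1, \ldots, s_k$ is a $B'$-Macaulay basis of this syzygy module, so Theorem~\ref{thm:conditions}(\ref{lst:expchar}) provides $v_1, \ldots, v_k \in R$ with $\lf_{B'}(t) = \sum_j v_j s_j$ and $\deg_{B'}(v_j s_j) \le b_0'$ for each $j$. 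Because $\lf_{B'}(s_j') = s_j$, I have $\deg_{B'}(v_j s_j') = \deg_{B'}(v_j s_j) \le b_0'$, and the $b_0'$-homogeneous component of $\sum_j v_j s_j'$ equals $\sum_{j:\deg_{B'}(v_j s_j')=b_0'} v_j s_j = \lf_{B'}(t)$. Thus $t - \sum_j v_j s_j' \in \Syz(m_1, \ldots, m_n)$ has $B'$-degree strictly less than $b_0'$. By the inductive hypothesis it admits a representation $\sum_j u_j s_j'$ with $\deg_{B'}(u_j s_j') < b_0'$, and adding back $\sum_j v_j s_j'$ gives the required representation of $t$.

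The main obstacle is purely bookkeeping: one has to make sure that the two different gradings on the same ambient module $R^{\oplus n}$ are handled consistently, specifically that passing from $B$ to the finer $B'$ preserves the degree drop in the construction of $s_j'$ (so that $\lf_{B'}(s_j') = s_j$), and that Lemma~\ref{lem:gradsyz} applies verbatim to the $B'$-grading so that $\lf_{B'}(t)$ is genuinely a syzygy of the leading forms. Once these compatibility points are set up, the inductive step is exactly the classical Schreyer-style telescoping argument, with the $B'$-Macaulay-basis hypothesis on $\{s_j\}$ doing all the real work at each stage.
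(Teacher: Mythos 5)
Your overall strategy is sound, but it takes a different route than the paper. The paper, after establishing $\lf_{B'} s_j' = \lf_{B'} s_j$ exactly as you do, simply verifies Definition~\ref{def:macbasis} directly in one step: for any $s \in \Syz(m_1, \ldots, m_n)$, equating top $B$-components of $\sum_i s_i m_i = 0$ gives $\lf_B s \in \Syz(\lf m_1, \ldots, \lf m_n)$, and then $\lf_{B'} s = \lf_{B'}(\lf_B s) \in (\lf_{B'} s_1, \ldots, \lf_{B'} s_k) = (\lf_{B'} s_1', \ldots, \lf_{B'} s_k')$, with no induction. You instead verify condition~(\ref{lst:expchar}) of Theorem~\ref{thm:conditions} by a Schreyer-style telescoping induction on $B'$-degree. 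This is longer but more explicit: it constructs an actual degree-respecting representation of each $t$, whereas the paper's argument only establishes the leading-form inclusion and then lets Theorem~\ref{thm:conditions} do the rest.

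There is one incorrect claim: $\lf_{B'}(s_j') = \lf_{B'}(s_j) = s_j$. The $s_j$ are $B$-homogeneous but not in general $B'$-homogeneous, so $\lf_{B'}(s_j)$ can be a proper piece of $s_j$. The subsequent identity $\sum_{j : \deg_{B'}(v_j s_j') = b_0'} v_j s_j = \lf_{B'}(t)$ is likewise not literally correct, since the individual products $v_j s_j$ may carry lower-degree $B'$-pieces. Both are easily repaired: from $\lf_{B'}(s_j') = \lf_{B'}(s_j)$ (which is all you actually need) you get $\deg_{B'}(s_j' - s_j) < \deg_{B'}(s_j)$, hence $\deg_{B'}(v_j(s_j' - s_j)) < b_0'$, so the $b_0'$-component of $\sum_j v_j s_j'$ agrees with that of $\sum_j v_j s_j = \lf_{B'}(t)$, and the induction proceeds. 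You should also justify $\lf_{B'}(t) \in \Syz(\lf m_1, \ldots, \lf m_n)$ more carefully: Lemma~\ref{lem:gradsyz} as stated is about syzygies of homogeneous elements, so the clean path is first $\lf_B(t) \in \Syz(\lf m_i)$ and then $\lf_{B'}(t) = \lf_{B'}(\lf_B t)$, as the paper does.
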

\begin{proof}
It is clear that $\lf_{B} s_{j}' = s_{j}$, since when reducing $\sum _{i = 1} ^{n} (s_{j})_{i}m_{i}$ to 0 all the additional terms must have degree less than $\deg_{B} s_{j}$. Since the grading by $B'$ refines the grading by $B$, we have $\lf_{B'} s_{j}' = \lf _{B'} s_{j}$. If $s \in \Syz(m_{1}, \dots, m_{n})$ then equating homogeneous components on both sides of $\sum _{i = 1} ^{n} s_{i}m_{i} = 0$ shows that $\lf_{B} s \in \Syz(\lf m_{1}, \dots, \lf m_{n})$. Thus
\begin{align*}
\lf_{B'} s &= \lf_{B'} (\lf_{B} s) \\
&\in (\lf_{B'} s_{1}, \dots, \lf_{B'} s_{k}) \\
&= (\lf_{B'} s_{1}', \dots, \lf _{B'}s_{k}')
\end{align*}
where the first equality is because the grading by $B'$ refines that of $B$, the second is because $s_{1}, \dots, s_{k}$ is a $B'$-Macaulay basis of $\Syz(\lf m_{1}, \dots, \lf m_{n})$, and the last follows from the argument in the previous paragraph. Thus by Theorem~\ref{thm:conditions}, $s_{1}', \dots, s_{j}'$ is a $B'$-Macaulay basis of $\Syz(m_{1}, \dots, m_{n})$.
\end{proof}

\begin{example}
Any grading refines itself, so we may take both gradings in Proposition~\ref{prop:compsyz} to be the grading defined in Lemma~\ref{lem:gradsyz}.
\end{example}

\begin{example}
Schreyer's theorem is usually stated as giving a Groebner basis of the syzygy module with respect to a term order, which is maximal refinement of the grading in Lemma~\ref{lem:gradsyz} where each graded component of the refined grading has dimension 1 over $\mathbf{k}$. When working only with Groebner bases, this is necessary because the grading defined in Lemma~\ref{lem:gradsyz} may have graded components with dimension greater than 1. In order to get a term order and Groebner basis we must maximally refine the grading, typically by breaking ties lexicographically among the $m_{i}$. The generalization in Proposition~\ref{prop:compsyz} makes it clear how the different gradings interact with each other.
\end{example}

\subsection{Macaulay bases of graded submodules and refined gradings}
In many situations, such as when dealing with projective varieties, a given submodule is graded and Groebner bases satisfy some additional properties. In particular, for the standard grading one can reduce computation of the Hilbert series of an ideal $I$ to computing the Hilbert series of the monomial ideal $\lf I$, which can then be reduced to a combinatorial problem. We provide generalizations of these facts here.

Throughout this subsection, we assume that $N$ has a grading by $B$ which is refined by a grading by $B'$. When $B$ is the standard grading on a polynomial ring and $B'$ is a monomial order we recover the case described before.

\begin{proposition} \label{prop:homogbasis}
Suppose that $M$ is a $B$-graded submodule of $N$. Then any reduced Macaulay basis of $M$ with respect to $B'$ consists of only $B$-homogeneous elements.
\end{proposition}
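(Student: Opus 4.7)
The plan is to argue by contradiction: suppose $X = \{m_1, \dots, m_n\}$ is a reduced $B'$-Macaulay basis of $M$ but some element $m_i$ fails to be $B$-homogeneous. Write $m_i = \sum_{c \in B} m_i^{(c)}$ for its $B$-homogeneous decomposition; because $M$ is $B$-graded each piece lies in $M$, and by assumption at least two pieces are nonzero. Fix any $c$ in the $B$-support of $m_i$ strictly below $b_i := \deg_B m_i$. The goal is to use $m_i^{(c)} \in M$ to exhibit a nontrivial reduction step on $m_i$ against $X \setminus \{m_i\}$, contradicting reducedness.

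Let $g : B' \to B$ denote the refinement map. The first task is to track the $B'$-degrees carefully. The same computation as in the earlier proposition on refinements of gradings yields $\deg_B = g \circ \deg_{B'}$, so setting $b' := \deg_{B'} m_i^{(c)}$ and $b'_i := \deg_{B'} m_i$ gives $g(b') = c < b_i = g(b'_i)$. Since $b'$ lies in the $B'$-support of $m_i$ we have $b' \leq b'_i$, and since it sits in a different fiber of $g$ we get $b' < b'_i$ strictly. The decomposition $N_c = \bigoplus_{b'' \in g^{-1}(c)} N_{b''}$ coming from the refinement further shows that the different $B$-homogeneous pieces $m_i^{(c')}$ live in disjoint $B'$-degrees, yielding the clean identity $(m_i)_{b'} = (m_i^{(c)})_{b'} = \lf_{B'}(m_i^{(c)}) \neq 0$.

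The second task is a degree computation on $W_{b'}$. Since $m_i^{(c)} \in M$ and $X$ is a $B'$-Macaulay basis, $\lf_{B'}(m_i^{(c)}) \in W_{b'}(X)$. I would then verify that $W_{b'}(X) = W_{b'}(X \setminus \{m_i\})$: any contribution $r\lf_{B'}(m_i)$ to $W_{b'}(X)$ with $r$ $A$-homogeneous would require $\deg_A(r) \cdot b'_i = b'$, but monotonicity of the $A$-action on $B'$ forces $\deg_A(r) \cdot b'_i \geq 0 \cdot b'_i = b'_i > b'$, ruling this out. Combining, $(m_i)_{b'}$ is a nonzero element of $W_{b'}(X \setminus \{m_i\})$, which witnesses a nontrivial reduction step of $m_i$ against $X \setminus \{m_i\}$ under either $\rightarrow^*$ or $\Rightarrow^*$, contradicting reducedness.

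The main obstacle is purely the bookkeeping between the two gradings and the monoid actions: one has to be precise about the fact that the $B$-homogeneous pieces of $m_i$ live in disjoint $g$-fibers of the $B'$-grading (so that $(m_i)_{b'} = \lf_{B'}(m_i^{(c)})$) and that strict monotonicity of the $A$-action on $B'$ prevents $m_i$ from ever contributing to $W_{b'}$ at degrees below $b'_i$. Once these two identifications are in hand, the contradiction is essentially formal.
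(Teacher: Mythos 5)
Your proof is correct and follows the same essential strategy as the paper's: a non-$B$-homogeneous $m_i$ has lower $B$-degree pieces lying in $M$ (since $M$ is $B$-graded), and these force a nontrivial reduction of $m_i$ against the rest of $X$, contradicting reducedness. Where the paper works with the whole tail $m_i - \lf_B m_i$ and leaves the degree bookkeeping implicit, you isolate a single lower component $m_i^{(c)}$ and explicitly verify the two small facts the paper glosses over — that $(m_i)_{b'} = \lf_{B'}(m_i^{(c)})$ because distinct $B$-components of $m_i$ occupy disjoint $g$-fibers of the $B'$-grading, and that $W_{b'}(X) = W_{b'}(X \setminus \{m_i\})$ for $b' < \deg_{B'} m_i$ by monotonicity of the $A$-action — which is a welcome addition rather than a different route.
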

\begin{proof}
Let $X = \{m_{1}, \dots, m_{n}\}$ be a reduced $B'$-Macaulay basis of $M$. Since $M$ is a $B$-graded submodule, and $B$-homogeneous component of any $m \in M$ is also contained in $M$, and in particular $\lf_{B} m_{i} \in M$ for any $i$. Thus $m_{i} - \lf_{B} m_{i} \in M$ and reduces to 0 with respect to $X$. However, since the given Macaulay basis is reduced we must have $m_{i} - \lf_{B} m_{i} = 0$, as otherwise we would be able to apply a nontrivial reduction to $m_{i}$. Thus $m_{i} = \lf_{B} m_{i}$ for every $m_{i}$ and is thus $B$-homogeneous.
\end{proof}

\begin{proposition}
Let $M$ be a $B$-graded submodule. Then $\lf_{B'} M$ is a $B$-graded submodule and $\dim M_{b} = \dim (\lf_{B'} M)_{b}$ for all $b \in B$.
\end{proposition}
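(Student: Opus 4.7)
The plan has two parts matching the two claims. For $B$-gradedness of $\lf_{B'} M$, I fix any $B'$-Macaulay basis $X = \{m_{1}, \ldots, m_{n}\}$ of $M$. Writing $g : B' \to B$ for the refinement homomorphism so that $N_{b} = \bigoplus_{b' \in g^{-1}(\{b\})} N_{b'}$, every $B'$-homogeneous element is automatically $B$-homogeneous: in particular each $\lf_{B'} m_{i} \in N_{b'_{i}} \subseteq N_{g(b'_{i})}$. Since $X$ is a Macaulay basis we have $\lf_{B'} M = (\lf_{B'} m_{1}, \ldots, \lf_{B'} m_{n})$, and a submodule generated by $B$-homogeneous elements is $B$-graded by splitting any $r \in R$ into $A$-homogeneous components and using $R_{a} \cdot N_{b} \subseteq N_{a \cdot b}$.

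For the dimension equality, fix $b \in B$ and filter $M_{b}$ by $B'$-degree: for each $b' \in g^{-1}(\{b\})$ set $M_{b}^{\leq b'} = \{m \in M_{b} : \deg_{B'} m \leq b'\} \cup \{0\}$ and define $M_{b}^{< b'}$ analogously. The map $\phi_{b, b'} : M_{b}^{\leq b'} \to N_{b'}$ sending $m$ to its $B'$-degree-$b'$ component is $\mathbf{k}$-linear with kernel $M_{b}^{< b'}$. The key claim is that the image of $\phi_{b, b'}$ is exactly $(\lf_{B'} M)_{b'}$. The containment $\subseteq$ is immediate from the definition of $\lf_{B'}$. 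For $\supseteq$, given $v \in (\lf_{B'} M)_{b'}$, write $v = \sum_{i} r_{i} \lf_{B'} p_{i}$ with each summand $B'$-homogeneous of degree $b'$ after splitting the $r_{i}$ into $A$-homogeneous pieces, and set $m = \sum_{i} r_{i} p_{i} \in M$; then $m_{b'} = v$ and $\deg_{B'} m \leq b'$, and the $B$-graded component $m^{(b)}$ (which lies in $M_{b}$ since $M$ is $B$-graded) still has $B'$-degree at most $b'$ and $b'$-component equal to $v$.

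Summing the resulting isomorphisms $M_{b}^{\leq b'}/M_{b}^{< b'} \cong (\lf_{B'} M)_{b'}$ over $b' \in g^{-1}(\{b\})$ yields $\dim_{\mathbf{k}} M_{b} = \sum_{b' \in g^{-1}(\{b\})} \dim_{\mathbf{k}} (\lf_{B'} M)_{b'} = \dim_{\mathbf{k}} (\lf_{B'} M)_{b}$, where the last equality uses that $\lf_{B'} M$ is both $B$-graded (by part (1)) and $B'$-graded, so its degree-$b$ piece decomposes along $g^{-1}(\{b\})$. The main subtlety I anticipate is the reverse containment in the image claim: the naive preimage $m = \sum_{i} r_{i} p_{i}$ need not lie in $M_{b}$, and one must pass to its $B$-homogeneous component $m^{(b)}$ (using $B$-gradedness of $M$) while checking that this does not increase the $B'$-degree, which holds because any $B'$-component of $m$ with degree $> b'$ vanishes and hence so do those of $m^{(b)}$.
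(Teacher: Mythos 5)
Your proof is correct but genuinely diverges from the paper's argument. The paper proves the dimension equality by choosing an arbitrary complementation on $N$ respecting the $B'$-grading, setting $Y = \{\lf_{B'}m_{i}\}$, observing $W_{b'}(X) = W_{b'}(Y)$, and then invoking the normal-form decomposition of Corollary~\ref{cor:normform} twice (once for $M$, once for $\lf_{B'}M$) to peel off the common complement $\bigoplus_{b'\in f^{-1}(\{b\})} N_{b'}\ominus W_{b'}(X)$ from $N_{b}$. You instead filter $M_{b}$ by $B'$-degree and identify each graded piece $M_{b}^{\leq b'}/M_{b}^{< b'}$ with $(\lf_{B'}M)_{b'}$ via the ``take the $b'$-component'' map, which requires no choice of complement at all; the one delicate point, passing from $m = \sum r_{i}p_{i} \in M$ to its $B$-homogeneous piece $m^{(b)} \in M_{b}$ and checking this preserves both the $B'$-degree bound and the top component, you handle correctly. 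Your route is arguably cleaner for this particular proposition --- it sidesteps the auxiliary Macaulay basis $Y$ of $\lf_{B'}M$ and the complemented-vector-space machinery entirely --- while the paper's proof has the virtue of re-using Corollary~\ref{cor:normform} and exhibiting a concrete common complement. A minor bonus: your part~(1) argument is also simpler than the paper's, since $\lf_{B'}m_{i}$ is $B'$-homogeneous by definition and hence automatically $B$-homogeneous, with no need to appeal to Proposition~\ref{prop:homogbasis} or to assume the basis is reduced.
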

\begin{proof}
Let $X = \{m_{1}, \dots, m_{n}\}$ be a Macaulay basis of $M$ with respect to $B'$. Then $\lf_{B'} M$ is generated by $\lf_{B'} m_{1}, \dots, \lf_{B'} m_{n}$. By Proposition~\ref{prop:homogbasis}, each $m_{i}$ is $B$-homogeneous, so each $\lf_{B'} m_{i}$ is as well and $\lf_{B'} M$ is a $B$-graded submodule.

Choose an arbitrary complementation on $N$ which respects the $B'$-grading. Let $f : B' \to B$ be the monotone map as in the definition of refinement and
\[ Y = \{\lf_{B'} m_{i} : 1 \leq i \leq n\}. \]
Then $W_{b'}(X) = W_{b'}(Y)$ for all $b' \in B'$, and $Y$ is a Macaulay basis of $\lf_{B'} M$ with respect to the $B'$-grading since $\lf_{B'} M$ is a $B'$-graded submodule. By Corollary~\ref{cor:normform} we have
\begin{align*}
N_{b} &= M_{b} \oplus \left( \bigoplus _{b' \in f^{-1}(\{b\})} N_{b'} \ominus W_{b'}(X) \right) \\
&= \bigoplus _{b' \in f^{-1}(\{b\})} (\lf_{B'} M)_{b'} \oplus \left( \bigoplus _{b' \in f^{-1}(\{b\})} N_{b'} \ominus W_{b'}(Y) \right) \\
&= (\lf_{B'} M)_{b} \oplus \left( \bigoplus _{b' \in f^{-1}(\{b\})} N_{b'} \ominus W_{b'}(X) \right).
\end{align*}
Thus
\[ \dim M_{b} = \dim (\lf_{B'} M)_{b} = \dim \left( \bigoplus _{b' \in f^{-1}(\{b\})} N_{b'} \ominus W_{b'}(X) \right). \]
\end{proof}

\begin{corollary}
If $H(M, b)$ denotes the Hilbert function of the graded submodule $M$, then $H(M, b) = H(\lf_{B'} M, b)$.
\end{corollary}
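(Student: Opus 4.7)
The plan is to observe that this corollary is essentially a restatement of the preceding proposition using the definition of the Hilbert function. By definition, $H(M, b) = \dim_{\mathbf{k}} M_{b}$, and similarly $H(\lf_{B'} M, b) = \dim_{\mathbf{k}} (\lf_{B'} M)_{b}$; so the claimed equality is immediate once both sides are unwrapped.

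The only thing to verify before applying the preceding proposition is that the Hilbert function of $\lf_{B'} M$ is well-defined as a function on $B$, i.e.\ that $\lf_{B'} M$ is genuinely a $B$-graded submodule of $N$. But this is exactly the first assertion of the preceding proposition, so no additional work is needed. Then the second assertion of that proposition, $\dim M_{b} = \dim(\lf_{B'} M)_{b}$, directly yields $H(M, b) = H(\lf_{B'} M, b)$.

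Since there is really no obstacle here, the only ``step'' is to cite the preceding proposition and spell out the definitions. A one-line proof of the form ``By definition and the preceding proposition, $H(M, b) = \dim_{\mathbf{k}} M_{b} = \dim_{\mathbf{k}} (\lf_{B'} M)_{b} = H(\lf_{B'} M, b)$'' suffices.
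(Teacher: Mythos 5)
Your proposal is correct and is exactly the intended argument: the paper gives no proof for this corollary precisely because it is an immediate restatement of the preceding proposition once one unwinds $H(M,b) = \dim_{\mathbf{k}} M_b$. Your added remark that the first part of the proposition guarantees $\lf_{B'} M$ is $B$-graded, so that its Hilbert function over $B$ is well-defined, is a sensible sanity check and matches the paper's setup.
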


Given a group $G$ acting on $N$ and an invariant graded submodule $M$, one could also ask for the decomposition of $M_{b}$ into irreducible representations. A similar argument as in the above proposition shows that $M_{b}$ and $(\lf_{B'} M)_{b}$ are isomorphic as $G$-representations. Of course, the simplest case is when $B'$ is a monomial order, but ideally this would be avoided since it would likely destroy the symmetry. It is an interesting problem to given an efficient algorithm for computing the Molien series of $G$ in this case.

\subsection{Macaulay \texorpdfstring{$H$}{H}-bases and homogenization} 
\label{sec:hbasis}
In this section we assume that $R$ is a polynomial ring with the standard grading and consider Macaulay $H$-bases of modules over $R$ graded by $\mathbb{N}$. The standard grading has the special property that we can homogenize and dehomogenize polynomials with respect to it, and these operations are inverse to each other up to a power of the homogenizing variable. We show how Macaulay $H$-bases can be used to compute homogenizations of modules, including the projective closure of the ideal of an affine variety. Recall that any Groebner basis with respect to a degree-graded monomial order is a Macaulay $H$-basis, so our results include ones for degree-graded Groebner bases as a special case.

Let $t$ be a distinguished variable of $R$ and let $T = \mathbf{k}[x_{1}, \dots, x_{d}]$ be the polynomial ring on all generators $x_{1}, \dots, x_{d}$ of $R$ except for $t$. In the rest of this section we will assume $N = \bigoplus _{i = 1} ^{n} R(-a_{i})$ with the standard $\mathbb{N}$-grading, where $R(-a_{i})$ is the graded $R$-module with underlying module $R$ and grading shifted by $a_{i}$.

\begin{definition}
Let $m \in N$. The \textbf{dehomogenization} of $m = (m_{1}, \dots, m_{n}) \in N$ is \[m^{D} = (m_{1}|_{t = 1}, \dots, m_{n}|_{t = 1}) \in T^{\oplus n}.\] The \textbf{homogenization} of $m = (m_{1}, \dots, m_{n}) \in T^{\oplus n}$ with respect to $N$ is
\[ m^{H} = t^{\max_{j} a_{j} + \deg m}(t^{-a_{1}}m_{1}(x_{1}/t, \dots, x_{d}/t), \dots, t^{-a_{n}}m_{n}(x_{1}/t, \dots, x_{d}/t)). \]
\end{definition}

Generalizing the known results for $H$-bases of ideals \cite{cocoa2}*{Proposition 4.3.19}, we can describe how Macaulay $H$-bases of modules interact with homogenization.

\begin{proposition}
\label{prop:comphom}
The following are equivalent:
\begin{enumerate}
\item The module $M^{H} \leq N$ is generated by $m_{1}^{H}, \dots, m_{n}^{H}$.
\item The elements $m_{1}, \dots, m_{n}$ are a Macaulay $H$-basis of $M \leq T^{\oplus n}$.
\end{enumerate}
\end{proposition}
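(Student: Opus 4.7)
The plan is to use characterization~(\ref{lst:expchar}) of Theorem~\ref{thm:conditions}, which reformulates the Macaulay basis property as the requirement that every $m\in M$ admit a representation $m=\sum_i r_i m_i$ with $\deg(r_i m_i)\leq\deg m$. I will pass between such representations for elements of $M$ and of $M^H$ using the near-inverse pair of operations $(-)^H$ and $(-)^D$.

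First I would record three facts about homogenization and dehomogenization. (i) $(m^H)^D=m$ for $m\in T^{\oplus n}$, and more generally $(rm)^D=r^D m^D$ for $r\in R$, $m\in N$. (ii) Homogenization preserves total degree in the shifted grading and is multiplicative on $T$, i.e.\ $(fg)^H=f^H g^H$ for $f,g\in T$, which follows from $\deg(fg)=\deg f+\deg g$ over a field. (iii) Two homogeneous elements of $N$ of the same degree with the same dehomogenization coincide, because a nonzero homogeneous polynomial of degree $d$ in $R$ cannot vanish at $t=1$: each monomial $x^\alpha$ is paired with the unique $t$-exponent $d-|\alpha|$, so evaluation at $t=1$ merely reads off its coefficients.

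For $(2)\Rightarrow(1)$, the inclusion $(m_1^H,\dots,m_n^H)\subseteq M^H$ is immediate. For the reverse, since $M^H$ is $R$-generated by $\{m^H:m\in M\}$ it suffices to show that each such $m^H$ lies in $(m_1^H,\dots,m_n^H)$. Given $m\in M$ of degree $d$, the Macaulay $H$-basis property supplies $m=\sum_i r_i m_i$ with $\deg(r_i m_i)\leq d$, and then
\[ F := \sum_i t^{\,d-\deg(r_i m_i)}\, r_i^H\, m_i^H \;\in\; (m_1^H,\dots,m_n^H) \]
is homogeneous of degree $d$ with $F^D=\sum_i r_i m_i=m=(m^H)^D$, so by (iii) we conclude $F=m^H$.

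For $(1)\Rightarrow(2)$, let $m\in M$ of degree $d$. Then $m^H$ is homogeneous of degree $d$ and lies in $M^H=(m_1^H,\dots,m_n^H)$, so write $m^H=\sum_i g_i m_i^H$ with $g_i\in R$; projecting each $g_i$ to its homogeneous component of degree $d-\deg m_i$, I may assume every $g_i$ is homogeneous with $\deg(g_i m_i^H)=d$. Dehomogenizing via (i) yields $m=\sum_i g_i^D m_i$, and setting $r_i:=g_i^D$ gives $\deg(r_i m_i)=\deg g_i^D+\deg m_i\leq\deg g_i+\deg m_i=d$, so characterization~(\ref{lst:expchar}) of Theorem~\ref{thm:conditions} applies. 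The only subtle point is the grading bookkeeping: the formula for $m^H$ is tuned so that $\deg m^H=\deg m$ under the shifted grading of $N=\bigoplus_i R(-a_i)$, and one must verify that the product identity in (ii) and the degree inequality in (i) stay consistent with the shifts; once this is done the argument is entirely formal.
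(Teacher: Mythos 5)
Your proof is correct and takes essentially the same route as the paper: both directions pass through the degree-bounded representation criterion of Theorem~\ref{thm:conditions}(\ref{lst:expchar}), using the near-inverse pair $(-)^{H}$ and $(-)^{D}$ to translate between a representation $m = \sum_i r_i m_i$ with $\deg(r_i m_i) \leq \deg m$ and a homogeneous representation $m^{H} = \sum_i g_i m_i^{H}$ in $N$. (One small remark: your exponent $t^{\,d - \deg(r_i m_i)}$ in the $(2)\Rightarrow(1)$ step is the one that balances the degrees correctly; the displayed exponent in the paper's proof carries an extra $\max_j a_j$.)
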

\begin{proof}
(1) $\Rightarrow$ (2) Suppose $M^{H}$ is generated by $m_{1}^{H}, \dots, m_{n}^{H}$. If $m \in M$ then $m^{H} \in M^{H}$, so $m^{H} = \sum _{i = 1} ^{n} r_{i}m_{i}^{H}$. By splitting each $r_{i}$ into its homogeneous components we may assume each $r_{i}$ is homogeneous. Then
\begin{align*}
\deg r_{i} &= \deg m^{H} - \deg m_{i}^{H} \\
&= \deg m - \deg m_{i}
\shortintertext{and thus}
m &= (m^{H})^{D} \\
&= \left(\sum _{i = 1} ^{n} r_{i}m_{i}^{H} \right)^{D} \\
&= \sum _{i = 1} ^{n} r_{i}^{D}(m_{i}^{H})^{D} \\
&= \sum _{i = 1} ^{n} r_{i}^{D}m_{i}.
\end{align*}
We see $\deg r_{i}^{D} \leq \deg r_{i} = \deg m - \deg m_{i}$, so $\deg r_{i}^{D}m_{i} \leq \deg m$. By Theorem \ref{thm:conditions}, $m_{1}, \dots, m_{n}$ is a Macaulay $H$-basis of $M$.

(2) $\Rightarrow$ (1) Suppose $m_{1}, \dots, m_{n}$ is a Macaulay $H$-basis of $M$. By Theorem \ref{thm:conditions} $m \in M$ can be written as $m = \sum _{i = 1} ^{n} r_{i}m_{i}$ where $\deg r_{i}m_{i} \leq \deg m$. From the definition of homogenization, we see
\begin{align*}
m^{H} &= \sum _{i = 1} ^{n} t^{\max_{j} a_{j} + \deg m - \deg r_{i}m_{i}} (r_{i}m_{i})^{H} \\
&= \sum _{i = 1} ^{n} t^{\max_{j} a_{j} + \deg m - \deg r_{i}m_{i}} r_{i}^{H}m_{i}^{H}
\end{align*}
where the first equality comes from the fact that $\deg r_{i}m_{i} \leq \deg m$ and the definition of homogenization.
\end{proof}

\section{Conclusion and further work}
\subsection{Algorithmic issues and computing Macaulay bases} \label{sec:issues}
So far, we have seen how Macaulay bases generalize many useful properties of Groebner and Macaulay $H$-bases. This includes a generalization of Buchberger's algorithm, but as with $H$-bases this requires computing generators of the syzygy module of leading forms. There has been some work on doing this for $H$-bases by Gaussian elimination on extremely large Macaulay matrices \cites{HJ2018, JS2018, yilmaz}. However, it is not clear that this is more efficient than computing a Groebner basis with respect to a degree-graded term order and then reducing it using the Macaulay $H$-basis reduction algorithm. It is stated in \cite{JS2018} that this process is numerically stable in floating point arithmetic; however, the numerical stability is not proven and the matrices involved could potentially be be very large compared to the input, which suggests that the worst-case numerical stability could be poor when the matrices are excessively large. It would be interesting to see if these issues can be circumvented to create an algorithm for computing Macaulay bases (or even just $H$-bases) more efficiently than computing a degree-graded Groebner basis. There is potential that an alternative Groebner basis algorithm, such as F4 \cite{faugere1999}, may be more suited for generalization to Macaulay bases. Additionally, it would be interesting to see if algorithms such as the Groebner walk or FGLM can be adapted to convert between Macaulay bases graded by the same monoid but with different total orderings.

It may be easier to find theoretical applications of Macaulay bases, especially when used as a relaxed replacement for Groebner bases. In ongoing work the author shows that an ideal appearing the sum-of-squares relaxation for an optimization problem in quantum information generically has a simple Macaulay basis, improving the bound on the running time of this algorithm compared to the bound in \cite{hnw2017}.

\subsection{Macaulay bases of noncommutative rings and group actions} We saw in Section~\ref{sec:applications} that the $\mathbf{k}$-span of a Macaulay basis of $M$ is $G$-invariant when $G$ acts on $N$ homogeneously, and that the output of the normal form algorithm is $G$-invariant. This suggests that many of the results on Macaulay bases can be generalized to Noetherian modules over suitable noncommutative rings, and in particular the \textit{crossed product} algebra $R \rtimes \mathbf{k}[G]$ of a ring $R$ acted upon by a group $G$, where the multiplication is defined by
\[
(r_{1} \otimes g_{1}) \cdot (r_{2} \otimes g_{2}) = r_{1}(g_{1} \cdot r_{2}) \otimes g_{1}g_{2}.
\]
One can verify that a $G$-invariant submodule of $R^{\oplus n}$ is the same as an $(R \rtimes \mathbf{k}[G])$-submodule of $R^{\oplus n}$ and is also Noetherian as an $(R \rtimes \mathbf{k}[G])$-module. Note that Groebner basis techniques cannot directly apply to the crossed product algebra: a finite group must have torsion and thus cannot have a compatible total order, so we cannot grade the crossed product in a way where each graded component has dimension 1 over $\mathbf{k}$. However, we can use the grading $(R \rtimes \mathbf{k}[G]))_{a} = R_{a} \otimes \mathbf{k}[G]$ when $G$ acts homogeneously, which makes it possible to apply Groebner basis techniques to the crossed product algebra in the form of Macaulay bases with respect to this grading. It would be very interesting to see how much of the standard theory can be carried even further to this or to other noncommutative settings.

\section{Acknowledgements}
The author would like to thank Alex Townsend and Pablo Parrilo for helpful discussions, and Anna Brosowsky and Connor Simpson for useful comments on earlier drafts of this paper. 

\bibliography{bib}

\end{document}